\def\pmod #1{\ ({\rm{mod}}\ #1)}
\def\Z{\Bbb Z}
\def\N{\Bbb N}
\def\Q{\Bbb Q}
\def\r{\right}
\def\bg{\bigg}
\def\({\bg(}
\def\){\bg)}
\def\gen{{\rm gen}}
\def\q{{\rm q}}
\def\b{{\rm b}}
\def\spn{{\rm spn}}
\def\ve{\varepsilon}
\def\Ack{\medskip\noindent {\bf Acknowledgments}}
\theoremstyle{plain}
\newtheorem{theorem}{Theorem}
\newtheorem{lemma}{Lemma}
\theoremstyle{definition}
\theoremstyle{remark}
\begin{document}
 \baselineskip=17pt
\hbox{}
\medskip
\title[Almost universal ternary sums of pentagonal numbers]
{Almost universal ternary sums of pentagonal numbers}
\date{}
\author[Hai-Liang Wu and Li-Yuan Wang] {Hai-Liang Wu and Li-Yuan Wang}

\thanks{2020 {\it Mathematics Subject Classification}.
Primary 11E25; Secondary 11D85, 11E20.
\newline\indent {\it Keywords}. Quadratic polynomials, ternary quadratic forms, pentagonal numbers.
\newline \indent Supported by the National Natural Science
Foundation of China (Grant No. 11971222).}

\address {(Hai-Liang Wu) School of Science, Nanjing University of Posts and Telecommunications, Nanjing 210023, People's Republic of China}
\email{\tt whl.math@smail.nju.edu.cn}

\address
{(Li-Yuan Wang) School of Physical and Mathematical Sciences, Nanjing Tech
University, Nanjing 211816, People's Republic of China}
\email {wly@smail.nju.edu.cn}

\begin{abstract}
For each integer $x$, the $x$-th generalized pentagonal number is denoted by $P_5(x)=(3x^2-x)/2$. Given odd positive integers $a,b,c$ and non-negative integers $r,s$, we employ the theory of ternary quadratic forms to determine when the sum $aP_5(x)+2^rbP_5(y)+2^scP_5(z)$ represents all but finitely many positive integers.
\end{abstract}

\maketitle
\section{Introduction}
\setcounter{lemma}{0}
\setcounter{theorem}{0}
\setcounter{corollary}{0}
\setcounter{remark}{0}
\setcounter{equation}{0}
\setcounter{conjecture}{0}
Given an integral positive definite ternary quadratic form $f(X_1,X_2,X_3)$, it is well known that
there are infinitely many positive integers that can not be represented by $f$ over $\Z$. For example, the famous Legendre three-square theorem shows that every positive integer in the set $\{4^k(8l+7): k,l=0,1,\cdots\}$ can not be represented as the sum of three squares. Hence many mathematicians started to investigate inhomogeneous quadratic polynomials. In particular, quadratic polynomials involving polygonal numbers have many interesting properties. For each integer $m\ge3$ and integer $x$, the $x$-th generalized $m$-gonal number is denoted by $P_m(x)=((m-2)x^2-(m-4)x)/2$. Fermat conjectured that every positive integer $n$ can be written as the sum of $m$ generalized $m$-gonal numbers, i.e., $n=P_m(x_1)+\cdots+P_m(x_m)$ always has integral solutions. The case $m=3$ was confirmed by Gauss. The case $m=4$ was proved by Lagrange and this is known as the Lagrange four squares theorem. The case $m\ge5$ was totally resolved by Cauchy (cf. \cite[pp. 3--35]{Na}).

In this line, Liouville studied the quadratic polynomial $aT_x + bT_y + cT_z$, where $T_x=x(x+1)/2$ is the $x$-th generalized triangular number. And in 1862 he succeeded in determining all triples of positive integers $(a,b,c)$ for which the sums $aT_x + bT_y + cT_z$ represent all positive integers.

In 2010, Kane and Sun \cite{KS} investigated the mixed sums of squares and triangular numbers that can represent all but finitely many positive integers. This work was later completed by Chan and Oh \cite{CO} and Chan and Haensch \cite{WKCANNA}. In 2015, Sun \cite{S15} studied the ternary sums of the generalized pentagonal numbers and showed that there are at most $11$ triples $(a_1,a_2,a_3)\in\N^3$ such that $a_1P_5(x)+a_2P_5(y)+a_3P_5(z)$ represents all positive integers. All of these candidates were later confirmed by F. Ge and Sun \cite{GS}, and B.-K. Oh \cite{OH5}. Recently, Ju \cite{Ju} showed that given a quadratic polynomial $a_1P_5(x_1)+\cdots+a_kP_5(x_k)$ with $a_1,\cdots,a_k$ positive integers, it can represent all positive integers if and only if it can represent $1, 3, 8, 9, 11, 18, 19, 25, 27, 43, 98$, and $109$.

Motivated by the above work, in this paper we focus on the ternary sums
$$\mathcal{F}(x,y,z)=aP_5(x)+2^rbP_5(y)+2^scP_5(z),$$
where $a,b,c$ are positive odd integers and $r,s$ are nonnegative integers. For convenience, a quadratic polynomial is called almost universal if it can represent all but finitely many positive integers over $\Z$. For the recent work on almost universal quadratic polynomials, readers may refer to \cite{CO,WKCANNA,A1,A2,ANNAKEN,KS,MS,S15,S17,X}.

It is clear that $n$ can be represented by $\mathcal{F}$ if and only if there are $x,y,z\in\Z$ such that
$$24n+a+2^rb+2^sc=a(6x-1)^2+2^rb(6y-1)^2+2^sc(6z-1)^2.$$
To solve our problems, we use the language of quadratic space and lattice, which we will give a brief introduction in Section 2. Let $(V,\b,\q)$ be a quadratic space over $\Q$ with symmetric bilinear map $\b$ and associated quadratic map $\q$.
Let $\{{\bf e_1},{\bf e_2}, {\bf e_3}\}$ be an orthogonal basis of $V$ satisfying
$$\q({\bf e_1})=a,\ \q({\bf e_2})=2^rb,\ {\rm and}\ \q({\bf e_3})=2^sc.$$
We let
$$L=\Z{\bf e_1}+\Z{\bf e_2}+\Z{\bf e_3},$$
and let
$${\bf v}=-({\bf e_1}+{\bf e_2}+{\bf e_3}).$$
Then one can verify that $n$ can be represented by $\mathcal{F}$ if and only if there is a vector ${\bf w}\in L$ such that $24n+a+2^rb+2^sc=\q({\bf v}+6{\bf w})$. Let $M:=\Z{\bf v}+6L$, and for each prime $p$, we let $M_p:=M\otimes_{\Z}\Z_p$, where $\Z_p$ denotes the ring of all $p$-adic integers.
We will see in Section 2 that the lattice $M$ has close relations with the global representation.

In the remaining part of this paper, we always assume that the following condition $(*)$ is satisfied:
$$(a,b,c)=1,\ 0\le r\le s,\ {\rm and}\ (6, a+2^rb+2^sc)=1.\ (*)$$
For each prime $p$ and any $p$-adic integer $x$, the symbol $\nu_p(x)$ denotes the $p$-adic order of $x$. Also, for any positive integer $y$, the square-free part of $y$ is denoted by $\mathfrak{sf}(y)$. Moreover, we say that $\mathcal{F}$ has no local obstruction if $\{\mathcal{F}(x,y,z): x,y,z\in\Z_p\}=\Z_p$ for each prime $p$ (we will discuss the local representations in Section 2). We now state our main results.
\begin{theorem}\label{Thm. A}
Suppose that $\mathcal{F}$ has no local obstruction, and assume that $0<r<s$ with $r\equiv s\pmod2$. Then $\mathcal{F}$ is not almost universal if and only if all of the following are satisfied:
\medskip

{\rm (i)} $\begin{cases}r\ge2,a\equiv b\equiv c\pmod4&\mbox{if}\ 2\mid\nu_3(abc),\\r\equiv s\equiv0\pmod2&\mbox{if}\ 2\nmid\nu_3(abc).\end{cases}$
\medskip

{\rm (ii)} For each prime factor $p\ne3$ of $\mathfrak{sf}(abc)$, we have $p\equiv1\pmod4$ if $2\mid\nu_3(abc)$ and $p\equiv1\pmod3$ if $2\nmid\nu_3(abc)$.
\medskip

{\rm (iii)} If $2\mid\nu_3(abc)$, then $M_3\cong\langle u_1,3^{2i}u_2,3^{2j}u_3\rangle$ with $u_1,u_2,u_3$ $3$-adic units and $0\le i\le j$. If $2\nmid\nu_3(abc)$, then $M_3\cong\langle u_1,3^iu_2,3^ju_3\rangle$ with $u_1,u_2,u_3$ $3$-adic units satisfying $u_1\equiv u_2\equiv u_3\pmod{3\Z_3}$ and $0<i<j$.
\medskip

{\rm (iv)} $\mathfrak{sf}(abc)\equiv a+2^rb+2^sc\pmod{24}$, and the equation
$$24\mathcal{F}(x,y,z)+a+2^rb+2^sc=\mathfrak{sf}(abc)$$
has no integral solutions.
\end{theorem}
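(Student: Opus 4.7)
The plan is to translate the representation problem for $\mathcal F$ into one about the coset $\mathbf v + 6L$ inside the ternary lattice $L$, and then attack it via the classical theory of spinor exceptions for positive-definite ternary lattices. As the excerpt records, $n$ is represented by $\mathcal F$ iff $N := 24n + a + 2^rb + 2^sc = \q(\mathbf v + 6\mathbf w)$ for some $\mathbf w\in L$. Since $\mathcal F$ is assumed to have no local obstruction, every such $N$ is locally represented by $M = \Z\mathbf v + 6L$, and the Duke--Schulze-Pillot theorem guarantees that sufficiently large $N$ is globally represented by $M$ unless $N$ is a primitive spinor exception for $\gen(M)$. My task therefore breaks into (a) determining the spinor-genus structure of $M$ from the invariants $a,b,c,r,s$, and (b) detecting infinitely many $N$ of the form $24n+a+2^rb+2^sc$ that lie in the exceptional square class but are not actually represented.

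For (a) I would compute $M_p$ and the spinor-norm group $\theta(O^+(M_p)) \se \Q_p^\times/\Q_p^{\times 2}$ at each prime. Away from $6abc$, $M_p$ is unimodular and contributes nothing. At an odd prime $p\mid\mathfrak{sf}(abc)$, $\theta(O^+(M_p))$ fails to exhaust $\Z_p^\times$ exactly under the congruence on $p$ demanded by (ii). At $p=2$ the spinor-norm group depends on $r,s$ and on $a,b,c\pmod 4$ in the precise way encoded by (i). At $p=3$ the two Jordan-splitting patterns in (iii), with even and odd valuations respectively, are exactly those that prevent $\theta(O^+(M_3))$ from exhausting $\Z_3^\times$; in the odd-valuation case the additional requirement $u_1\equiv u_2\equiv u_3\pmod{3\Z_3}$ is what makes the modulo-$3$ reduction of the associated norm character trivial. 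Assembling the local contributions via the product formula for spinor norms shows that conditions (i)--(iii) together are equivalent to $\gen(M)\ne\spn(M)$, and in that case the primitive spinor-exceptional integers of $M$ form a single square-class whose distinguished representative the local computation pins down as $\mathfrak{sf}(abc)$.

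With the exceptional square class identified, the first half of (iv) is exactly the congruence condition that the progression $24\N + a + 2^rb + 2^sc$ meets this class; without it, no $N$ in the progression can be exceptional and every sufficiently large admissible $N$ is represented. The second half of (iv) says that the seed $N=\mathfrak{sf}(abc)$ itself is not represented, and by multiplying by squares coprime to the discriminant of $M$ this propagates to infinitely many unrepresented $N$ in the same class, proving that $\mathcal F$ is not almost universal. For the converse, failure of any of (i)--(iii) collapses $\spn(M)$ to $\gen(M)$, so every sufficiently large locally represented $N$ is represented; if (i)--(iii) hold but (iv) fails, then either the exceptional class misses the progression modulo $24$ or else the seed $\mathfrak{sf}(abc)$ is represented, and the same propagation argument run in reverse confines the set of unrepresented $N$ to a finite set. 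The main technical obstacle will be the careful case analysis at $p=2$ and $p=3$ needed to compute $\theta(O^+(M_p))$ cleanly -- especially the bifurcation at $p=3$ according to the parity of $\nu_3(abc)$, where the two Jordan-splitting patterns in (iii) must be handled separately -- and then verifying that $\mathfrak{sf}(abc)$ is indeed the correct exceptional seed in every subcase.
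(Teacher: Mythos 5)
Your overall strategy is the same as the paper's: pass to $M=\Z{\bf v}+6L$, invoke Duke--Schulze-Pillot, and decide almost universality through primitive spinor exceptions detected by local spinor-norm computations at $2$, $3$ and the primes dividing $abc$. But there is a genuine gap in the middle step. You assert that (i)--(iii) are equivalent to $\gen(M)\ne\spn(M)$ and that this alone singles out one exceptional square class with representative $\mathfrak{sf}(abc)$. That is not the right criterion: having more than one spinor genus is necessary but far from sufficient for a given $t$ to be a primitive spinor exception. The paper works with the Earnest--Hsia--Hung criterion, which demands, for the specific candidate $t$ and the specific field $E=\Q(\sqrt{-tdM})$, both the containment $\theta(O^+(M_p))\se{\rm N}_{E_p/\Q_p}(E_p^{\times})$ \emph{and} the equality of this norm group with the primitive relative spinor norm group $\theta^*(M_p,t)$ at every prime. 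Two consequences you cannot reach from your formulation: the identification of $E$ (hence of the square class of $t$) requires the paper's Lemma 2.2, namely that $M_p\cong\langle1,-1,-dM\rangle$ away from $2,3$ forces $\Q_p(\sqrt{-tdM})/\Q_p$ to be unramified there, so that under $r\equiv s\pmod 2$ one gets $E=\Q(\sqrt{-1})$ or $\Q(\sqrt{-3})$ according to the parity of $\nu_3(abc)$; and the equality with $\theta^*(M_2,t)$ is exactly what rules out $r=1$ and yields the clause $r\ge2$, $a\equiv b\equiv c\pmod4$ in (i) --- a computation of $\theta(O^+(M_2))$ alone, or a count of spinor genera, does not see this. (Also, at an odd prime $p\mid\mathfrak{sf}(abc)$ the relevant issue is not whether $\theta(O^+(M_p))$ exhausts $\Z_p^{\times}$ --- it always contains it --- but whether $-tdM\in\Q_p^{\times2}$, which is where (ii) comes from.)

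The second gap is the propagation step. From the single unrepresented seed $t=\mathfrak{sf}(abc)$ you propose to obtain infinitely many unrepresented terms of the progression ``by multiplying by squares coprime to the discriminant,'' but $tp^2$ may well be represented by $M$ even when $t$ is not; lying in the exceptional square class is not enough. The paper's argument uses Schulze-Pillot's theorem on exceptional integers with primes of prescribed splitting type in $\Q(\sqrt{-tdM})$: if $t\not\in\q(\spn(M))$ one takes $p$ split, getting $tp^2\not\in\q(\spn(M))$; if $t\in\q(\spn(M))$ but $t\not\in\q(M)$ one takes $p$ inert, gets $tp^2\not\in\q^*(\spn(M))$, and then uses square-freeness of $t$ to exclude imprimitive representations of $tp^2$. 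Without this split/inert dichotomy the ``not almost universal'' direction does not follow. The remainder of your outline (the role of (iv), and the converse via Duke--Schulze-Pillot when no exception in the progression escapes representation) matches the paper once these two points are repaired.
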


\begin{theorem}\label{Thm. A1}
Suppose that $\mathcal{F}$ has no local obstruction, and assume that $0<r<s$ with $r\not\equiv s\pmod2$ and $2\mid\nu_3(abc)$. Then $\mathcal{F}$ is not almost universal if and only if all of the following are satisfied:
\medskip

{\rm (i)} One of the conditions {\rm $(\alpha)$} and {\rm $(\beta)$} holds, where

{\rm $(\alpha)$} $r=1,s\ge4$, $ab\equiv1\pmod8$, $c(a+2b)\equiv\begin{cases}1\pmod8&\mbox{if}\ s=4,\\1,3\pmod8&\mbox{if}\ s\ge6.\end{cases}$

{\rm $(\beta)$} $r>2, ab\equiv1,3\pmod8$, $bc\equiv\begin{cases}1\pmod8&\mbox{if}\ s-r=1,3,\\1,3\pmod8&\mbox{if}\ s-r\ge5.\end{cases}$
\medskip

{\rm (ii)} For each prime factor $p\ne3$ of $\mathfrak{sf}(abc)$, the integer $-2$ is a quadratic residue modulo $p$.
\medskip

{\rm (iii)} $M_3\cong\langle u_1,3^{2i}u_2,3^{2j}u_3\rangle$ with $u_1,u_2,u_3$ $3$-adic units and $0\le i\le j$.
\medskip

{\rm (iv)} $\mathfrak{sf}(abc)\equiv a+2^rb+2^sc\pmod{24}$, and the equation
$$24\mathcal{F}(x,y,z)+a+2^rb+2^sc=\mathfrak{sf}(abc)$$
has no integral solutions.
\end{theorem}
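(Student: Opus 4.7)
The plan is to analyse almost universality of $\mathcal F$ via the spinor-genus theory of the ternary $\Z$-lattice $M=\Z{\bf v}+6L$ introduced in Section~2. Since $n$ is represented by $\mathcal F$ precisely when $24n+a+2^rb+2^sc\in\q({\bf v}+6L)$, the question reduces to whether all sufficiently large $N$ in the arithmetic progression $N\equiv a+2^rb+2^sc\pmod{24}$ are represented by $M$. Under the no-local-obstruction hypothesis every such $N$ is represented by the genus of $M$, and by the Duke--Schulze-Pillot theorem all but finitely many integers represented by the spinor genus of $M$ are represented by $M$ itself; consequently $\mathcal F$ fails to be almost universal iff infinitely many such $N$ lie outside the spinor genus of $M$, i.e.\ are \emph{spinor exceptional} for $M$.

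The first substantive step is to produce a Jordan decomposition of $M_p$ at every prime under the current hypotheses $0<r<s$, $r\not\equiv s\pmod 2$, $2\mid\nu_3(abc)$. For $p\nmid 6abc$ this is trivial; for odd $p\mid abc$ splitting into $p\ne 3$ and $p=3$ yields precisely conditions~(ii) and~(iii) together with local universality. The real difficulty, and the main obstacle, is the $2$-adic analysis: because $r\not\equiv s\pmod 2$, exactly one orthogonal summand of $M_2$ has odd $2$-adic scale, so $M_2$ decomposes as a one-dimensional piece of odd scale plus a binary piece. Tracking how the passage from $L$ to $\Z{\bf v}+6L$ interacts with this Jordan type across the regimes $r=1$, $r=2$, and $r\ge 3$ is what produces the bifurcation $(\alpha)$ vs.\ $(\beta)$ in condition~(i); the further thresholds $s=4$ vs.\ $s\ge6$ and $s-r\in\{1,3\}$ vs.\ $s-r\ge 5$ correspond exactly to when the primitive spinor norm group $\theta(O^+(M_2))$ contains the $2$-adic square-class of $\mathfrak{sf}(abc)$, which is what the refined $8$-adic congruences on $ab$, $bc$, and $c(a+2b)$ encode.

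With the local data in hand I would use the idele-class description of the spinor genera in $\gen(M)$ to enumerate the square-classes of spinor exceptional integers for $M$. The expected outcome is that conditions (i)--(iii) collectively encode that the square-class of $\mathfrak{sf}(abc)$ is spinor exceptional and meets our arithmetic progression modulo~$24$, producing an infinite family of spinor exceptional $N\equiv a+2^rb+2^sc\pmod{24}$; conversely, failure of any of (i)--(iii) excludes this only candidate square-class, leaving finitely many exceptional $N$ in the progression, and hence $\mathcal F$ almost universal. Condition~(iv) then serves as the finite rigidity check deciding whether this infinite family is genuinely missed by $M$: by a standard orbit argument the family is simultaneously represented or missed according to the fate of its minimal member $N=\mathfrak{sf}(abc)$, so its non-representation---which is exactly what the equation in~(iv) asserts---together with (i)--(iii) is both necessary and sufficient for $\mathcal F$ to fail to be almost universal. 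Once the $2$-adic Jordan analysis is executed, the remaining spinor-genus and square-class bookkeeping is lengthy but mechanical.
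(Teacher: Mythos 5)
Your overall strategy---reduce almost universality to the existence of primitive spinor exceptions of $\gen(M)$ in $\mathcal{A}=\{24n+a+2^rb+2^sc\colon n\ge0\}$ via Duke--Schulze-Pillot, then read conditions (i)--(iv) off local spinor norm computations---is exactly the paper's route, so there is no methodological divergence. The difficulty is that the plan stops short of the steps that constitute the actual proof, and the one place where you describe the decisive $2$-adic criterion, you state it incorrectly. First, nothing in your outline pins down the quadratic field attached to the candidate exception: one needs $\Q(\sqrt{-t\,dM})=\Q(\sqrt{-2})$ for $t=\mathfrak{sf}(abc)$ when $r\not\equiv s\pmod 2$ and $2\mid\nu_3(abc)$, which follows from the paper's Lemma 2.2 (unramifiedness of $\Q_p(\sqrt{-tdM})/\Q_p$ for $p\ne2,3$, forced by $\Z_p^{\times}\subseteq\theta(O^+(M_p))$ and local class field theory); this lemma is also what justifies your unproved assertion that $\mathfrak{sf}(abc)$ is the ``only candidate square-class.'' Second, the relevant $2$-adic condition is not that $\theta(O^+(M_2))$ \emph{contains} the square class of $\mathfrak{sf}(abc)$: the Earnest--Hsia--Hung criterion demands the opposite containment, $\theta(O^+(M_2))\subseteq{\rm N}_{\Q_2(\sqrt{-2})/\Q_2}(\Q_2(\sqrt{-2})^{\times})=\{1,2,3,6\}\Q_2^{\times2}$, together with the equality of this norm group with the primitive relative spinor norm group $\theta^*(M_2,t)$ (their Theorem 2(c)). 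Extracting (i) from this requires the explicit Jordan splitting $M_2\cong\langle\ve,\,2^{r+2}\ve b(a+2^sc),\,2^{s+2}ac(a+2^sc)\rangle$, the Earnest--Hsia factorization $\theta(O^+(M_2))=\q(\mathcal{P}(U))\q(\mathcal{P}(W))\Q_2^{\times2}$, and the binary spinor norm tables; that is precisely where the case split $r=1$, the exclusion of $r=2$ (via the contradiction $5\in\{1,2,3,6\}\Q_2^{\times2}$), $r>2$, and the thresholds $s=4$ versus $s\ge6$ and $s-r\in\{1,3\}$ versus $s-r\ge5$ come from---all of which you defer as ``mechanical.''

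There are also gaps in the global bookkeeping. The passage from the single integer $\mathfrak{sf}(abc)$ to infinitely many missed members of $\mathcal{A}$ is not a ``standard orbit argument'': the members of $\mathcal{A}$ in the exceptional square class are $\mathfrak{sf}(abc)m^2$ with $(m,6)=1$, they are not square-free, and one needs Schulze-Pillot's theorem that $tp^2$ fails to be (primitively) represented by $\spn(M)$ for suitable split or inert primes $p$, combined with $t$ square-free and $t\notin\q(M)$, to conclude $tp^2\notin\q(M)$; the converse direction uses the trivial scaling plus the Duke--Schulze-Pillot theorem for \emph{primitive} representations, which is why the paper works throughout with $\q^*(\gen(M))$ and primitive spinor exceptions rather than with ``lying outside the spinor genus,'' as you phrase it. Finally, the representation problem is for the coset ${\bf v}+6L$, so one also needs the paper's observation that $l(n)\in\q(M)$ if and only if $l(n)\in\q({\bf v}+6L)$ (using $(6,\ve)=1$). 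As it stands, your proposal is a correct road map coinciding with the paper's, but the substantive content---the $2$-adic computation yielding (i), the identification of $\Q(\sqrt{-2})$, and the spinor-exception argument behind (iv)---is missing or misstated.
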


\begin{theorem}\label{Thm. A2}
Suppose that $\mathcal{F}$ has no local obstruction, and assume that $0<r<s$ with $r\not\equiv s\pmod2$ and $2\nmid\nu_3(abc)$. Then $\mathcal{F}$ is not almost universal if and only if all of the following are satisfied:
\medskip

{\rm (i)} One of the conditions {\rm $(\alpha)$} and {\rm $(\beta)$} holds, where

{\rm $(\alpha)$} $r=1,s\ge4$, $ab\equiv3\pmod8$, $c(a+2b)\equiv\begin{cases}1\pmod8&\mbox{if}\ s=4,\\\pm1\pmod8&\mbox{if}\ s\ge6.\end{cases}$

{\rm $(\beta)$} $r>2$, $ab\equiv\pm((-1)^r+2)\pmod8$, $bc\equiv\begin{cases}3\pmod8&\mbox{if}\ s-r\le3,\\\pm3\pmod8&\mbox{if}\ s-r\ge5.\end{cases}$
\medskip

{\rm (ii)} For any prime factor $p\ne3$ of $\mathfrak{sf}(abc)$, the integer $-6$ is a quadratic residue modulo $p$.
\medskip

{\rm (iii)} $M_3\cong\langle u_1,3^iu_2,3^ju_3\rangle$ with $u_1,u_2,u_3$ $3$-adic units satisfying $u_1u_2\equiv (-1)^i\pmod{3\Z_3}$, $u_1u_3\equiv(-1)^j\pmod{3\Z_3}$ and $0<i<j$.
\medskip

{\rm (iv)} $\mathfrak{sf}(abc)\equiv a+2^rb+2^sc\pmod{24}$, and the equation
$$24\mathcal{F}(x,y,z)+a+2^rb+2^sc=\mathfrak{sf}(abc)$$
has no integral solutions.
\end{theorem}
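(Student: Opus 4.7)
The overall approach mirrors the strategy that presumably drives Theorems~\ref{Thm. A} and~\ref{Thm. A1}: translate global representation of $n$ by $\mathcal{F}$ into representation of $N=24n+a+2^rb+2^sc$ by the shifted lattice $M=\Z\mathbf{v}+6L$, and then exploit the theory of spinor representations of ternary quadratic forms. By the Duke--Schulze-Pillot theorem, an integer $N$ that is locally represented by $M$ fails to be globally represented only if $N$ is a spinor exceptional integer; such integers can exist only when the spinor genus $\spn(M)$ is a proper subset of the genus $\gen(M)$, in which case the set of spinor exceptions is a finite union of arithmetic progressions of the form $t^2\,\mathfrak{sf}(abc)$. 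Proving the theorem therefore reduces to (a) pinning down when $\spn(M)\ne\gen(M)$ in terms of local data, and (b) identifying the resulting spinor exceptions and deciding which of them arise from valid $n$.

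The first concrete step is to write down the Jordan decomposition of $M_p$ at each prime $p$. Under the standing condition $(*)$ the primes $p\ge 5$ not dividing $abc$ are unimodular and carry no information. The three interesting places are $p=2$, $p=3$, and the odd prime divisors of $\mathfrak{sf}(abc)$ other than $3$. The $2$-adic structure, dictated by the parity $r\not\equiv s\pmod 2$ and by $ab,bc\pmod 8$, is exactly what condition~(i) of the theorem records in its two sub-cases $(\alpha)$ and $(\beta)$. The $3$-adic structure, under the assumption $2\nmid\nu_3(abc)$, necessarily has the odd-level Jordan form appearing in~(iii); the extra congruences $u_1u_2\equiv(-1)^i$ and $u_1u_3\equiv(-1)^j\pmod{3\Z_3}$ are what is needed for the $3$-adic spinor norm to lie in a specific index-$2$ subgroup. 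At a ramified prime $p\ne 3$ of $abc$, the quadratic-residue condition $\left(\tfrac{-6}{p}\right)=1$ in~(ii) is equivalent to the local spinor norm group being the full unit group; the presence of $-6$ rather than $-2$ or $-4$ is a consequence of the odd power of $3$ in the discriminant.

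The main technical obstacle, and the heart of the proof, is the spinor-norm calculation at $p=2$ and $p=3$ that converts conditions~(i)--(iii) into the statement $\spn(M)\subsetneq\gen(M)$. At $p=2$ this requires treating the three regimes $r=1$, $r>2$ with $s-r\in\{1,3\}$, and $r>2$ with $s-r\ge 5$ separately, each with its own list of symmetries and reflections whose spinor norms must be catalogued; the congruences for $ab$, $bc$ and $c(a+2b)$ modulo $8$ in~(i) emerge precisely as the conditions that make the computed $\theta(O^+(M_2))$ small enough. At $p=3$ the odd-level block forces an analogous restriction, and Hilbert reciprocity links the $2$- and $3$-adic data so that conditions~(i) and~(iii) must coexist. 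Once the spinor norm groups are in hand, the remaining idelic exact sequence argument is standard and shows that the genus splits into two spinor genera of equal size.

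Assuming (i)--(iii), the set of spinor exceptions is described, via class field theory for the quadratic extension that distinguishes the two spinor genera, as $\{t^2\,\mathfrak{sf}(abc):t\in T\}$ for an infinite set $T$ cut out by finitely many local congruences. The congruence $\mathfrak{sf}(abc)\equiv a+2^rb+2^sc\pmod{24}$ in~(iv) is then exactly the requirement that these exceptions be of the admissible form $N=24n+a+2^rb+2^sc$, so that they produce infinitely many non-represented $n$; the second clause in~(iv) rules out the degenerate possibility that the seed exception $\mathfrak{sf}(abc)$ itself happens to be represented, in which case only finitely many exceptions survive and $\mathcal{F}$ is almost universal. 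Conversely, if any one of (i)--(iii) fails then $\spn(M)=\gen(M)$ and all sufficiently large locally represented integers are represented by $\mathcal{F}$; if only~(iv) fails then the spinor exceptional integers miss the progression $N\pmod{24}$ and again only finitely many $n$ are exceptional. Combining these two directions yields the asserted characterization.
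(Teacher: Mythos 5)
Your outline follows the same road map as the paper's argument (pass to the shifted lattice $M=\Z{\bf v}+6L$, invoke Duke--Schulze-Pillot so that failure of almost universality is equivalent to the existence of a primitive spinor exception in $\mathcal{A}=\{24n+a+2^rb+2^sc\}$ that $M$ misses, pin the relevant quadratic field down to $\Q(\sqrt{-6})$, and translate everything into local spinor-norm conditions), but it stops exactly where the proof of Theorem~\ref{Thm. A2} actually begins. The entire content of conditions (i) and (iii) is the explicit computation of $\theta(O^+(M_2))$ and $\theta(O^+(M_3))$: one needs the Jordan splitting (\ref{Eq. M2}) of $M_2$, the Earnest--Hsia formulas for binary and non-modular ternary $\Z_2$-lattices to evaluate $\q(\mathcal{P}(U))\q(\mathcal{P}(W))\Q_2^{\times2}$ in the regimes $r=1$, $r=2$, $r>2$ and $s-r\in\{1,3\}$ versus $s-r\ge5$, and Kneser's Satz for $M_3$, followed by the Earnest--Hsia--Hung theorems to upgrade the containment $\theta(O^+(M_p))\subseteq{\rm N}_{\Q_p(\sqrt{-6})/\Q_p}(\Q_p(\sqrt{-6})^{\times})$ to equality with $\theta^*(M_p,t)$. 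You assert that the congruences in (i) ``emerge precisely'' from such a computation without performing it; in particular nothing in your sketch produces the exclusion of $r=2$, the requirement $s\ge4$ when $r=1$, the sign pattern $ab\equiv\pm((-1)^r+2)\pmod 8$, or the dichotomy at $s-r\le3$ versus $s-r\ge5$, all of which are exactly what must be proved.

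Two of your framing statements are also off. First, the correct criterion is not ``$\spn(M)\subsetneq\gen(M)$'': a specific $t$ is a primitive spinor exception if and only if the per-integer relative spinor norm condition (\ref{Eq. A}) holds at every prime, and the converse direction of the theorem proceeds by taking an arbitrary exception $t'\in\mathcal{A}$, using Lemma~\ref{Lemma quadratic field} to force $\Q(\sqrt{-t'dM})=\Q(\sqrt{-6})$, and reading off (i)--(iii) prime by prime; the failure of (i)--(iii) does not say the genus has a single spinor genus, only that no element of $\mathcal{A}$ is a primitive spinor exception, which is what is actually needed. Second, there is no ``Hilbert reciprocity'' linkage forcing (i) and (iii) to coexist: the conditions at $2$, at $3$, and at the primes $p\mid\mathfrak{sf}(abc)$ are independent requirements, each coming from (\ref{Eq. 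A}) at that prime. The treatment of (iv) in your last paragraph is essentially right, but as it stands the proposal is a plan whose decisive computations are missing rather than a proof.
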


\begin{theorem}\label{Thm. B}
Suppose that $\mathcal{F}$ has no local obstruction, and assume that $r=s>0$ and $2\nmid\nu_3(abc)$. Then $\mathcal{F}$ is not almost universal if and only if all of the following are satisfied:

{\rm (i)} $r$ is even and $b\not\equiv c\pmod4$.

{\rm (ii)} Every prime factor $\ne3$ of $\mathfrak{sf}(abc)$ is congruent to $1$ modulo $3$.

{\rm (iii)} $M_3\cong\langle u_1, 3^iu_2,3^ju_3\rangle$, where $0<i<j$ are positive integers and
$u_1,u_2,u_3$ are $3$-adic units with $u_1\equiv u_2\equiv u_3\pmod{3\Z_3}$.

{\rm (iv)} $\mathfrak{sf}(abc)\equiv a+2^rb+2^sc\pmod{24}$, and the equation
$$24\mathcal{F}(x,y,z)+a+2^rb+2^sc=\mathfrak{sf}(abc)$$
has no integral solutions.
\end{theorem}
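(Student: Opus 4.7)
The plan is to translate the question into one about the ternary $\Z$-lattice $M = \Z{\bf v} + 6L$ and then invoke the theory of spinor genera of ternary quadratic forms. As set up in the introduction, $n$ is represented by $\mathcal{F}$ if and only if the integer $N := 24n + a + 2^rb + 2^sc$ equals $\q({\bf v} + 6{\bf w})$ for some ${\bf w} \in L$; tracking the cosets of $M / 6L$ reduces this, up to finitely many exceptions, to asking whether $N$ is represented by $M$ itself. Thus $\mathcal{F}$ is almost universal if and only if $M$ represents all but finitely many integers in the progression $\{24n + a + 2^rb + 2^sc : n \in \N\}$.

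The central tool is the Duke--Schulze-Pillot theorem: for a positive definite ternary $\Z$-lattice $M$, every sufficiently large integer that is locally represented by $M$ and does not lie in a spinor exceptional squareclass of $\gen(M)$ is represented by $\spn(M)$. Since $\mathcal{F}$ has no local obstruction by hypothesis, failure of almost-universality can only arise from (a) an infinite family of spinor exceptions in the target progression, or (b) integers represented by $\spn(M)$ but not by $M$. A standard class-number transfer argument shows that (b) produces only finitely many exceptions in our setting, so the core task is to characterise when the spinor exceptional squareclass is both nonempty and meets the arithmetic progression infinitely often.

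Next I would carry out the explicit computation of $M_p$ at every prime. For $p \nmid 6\,\mathfrak{sf}(abc)$ the lattice is unimodular and contributes nothing. At odd primes $p \mid \mathfrak{sf}(abc)$ with $p \ne 3$, a spinor exception in the squareclass of $\mathfrak{sf}(abc)$ survives only when the local spinor-norm group is a proper subgroup of $\Q_p^\times / (\Q_p^\times)^2$; under $2 \nmid \nu_3(abc)$ this forces $-3$ to be a square mod $p$, i.e.\ $p \equiv 1 \pmod 3$, which is condition (ii). At $p = 3$ the condition $2\nmid\nu_3(abc)$ forces three distinct Jordan valuations, and condition (iii) records the precise shape $\langle u_1, 3^i u_2, 3^j u_3\rangle$ with $0 < i < j$ and unit-congruence mod $3\Z_3$ needed for a nontrivial spinor exception; any other shape makes $\theta(O^+(M_3))$ equal $\Q_3^\times$. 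At $p = 2$, since $r = s > 0$, $M_2$ contains a $2^r$-scaled binary block, and a careful enumeration of its Jordan types shows that ``$r$ even, $b \not\equiv c \pmod 4$'' in (i) is exactly the condition for the $2$-adic spinor norm group to admit the exceptional squareclass. Combining these, the spinor exceptions are integers of the form $\mathfrak{sf}(abc)\cdot t^2$ whose prime divisors of $t$ satisfy (ii); intersecting with the target progression modulo $24$ yields the congruence in (iv), whose final clause handles the one potentially small exception $N = \mathfrak{sf}(abc)$.

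The main obstacle will be the local analysis at $p = 2$ and $p = 3$ underlying the third step. At $p = 3$ the three distinct Jordan valuations mean the spinor norm group must be computed by Hensel's lemma together with Earnest--Hsia type formulas, and one has to check delicately how the unit-congruence $u_1 \equiv u_2 \equiv u_3 \pmod{3\Z_3}$ shrinks it. At $p = 2$, the coincidence $r = s$ produces a scaled binary block whose invariants depend sensitively on $b, c \pmod 8$, and isolating the case ``$r$ even and $b \not\equiv c \pmod 4$'' from the other $2$-adic possibilities requires a case analysis over all relevant Jordan types. Once these local facts are established, the forward direction is finished by exhibiting an infinite family of spinor-exceptional integers in the progression (built from primes $p \equiv 1 \pmod 3$ satisfying (ii)) that are unrepresented by $\spn(M)$, and the converse by showing that violating any one of (i)--(iv) collapses the exceptional set to a finite one.
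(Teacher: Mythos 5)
Your overall route is the paper's: reduce almost universality of $\mathcal{F}$ to representation of the progression $\mathcal{A}=\{24n+a+2^rb+2^sc:n\ge0\}$ by the lattice $M=\Z{\bf v}+6L$, invoke the Duke--Schulze-Pillot theorem, and decide when the relevant square class contains a primitive spinor exception by computing the local spinor norm groups at $2$, at $3$, and at the primes dividing $\mathfrak{sf}(abc)$; conditions (i)--(iii) do come out of exactly the local analysis you sketch (the paper performs it via the Earnest--Hsia formulas at $p=2$, Kneser's Satz 3 at $p=3$, and the Earnest--Hsia--Hung criterion for primitive spinor exceptions). Those deferred $2$-adic and $3$-adic computations are the substantive content of (i) and (iii), so as written your text is a plan rather than a proof, but the plan coincides with the paper except for one point.

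That point is a genuine gap: your treatment of the discrepancy between $\spn(M)$ and $M$. You claim that integers represented by $\spn(M)$ but not by $M$ ``produce only finitely many exceptions by a standard class-number transfer argument,'' and accordingly you read the last clause of (iv) as merely handling ``the one potentially small exception $N=\mathfrak{sf}(abc)$.'' This is wrong in the decisive subcase: if the spinor exception $t$ \emph{is} represented by $\spn(M)$ but \emph{not} by $M$, then every $tp^2$ is (imprimitively) represented by $\spn(M)$, so your proposed infinite family of integers ``unrepresented by $\spn(M)$'' does not exist, and your finiteness claim would lead you to conclude almost universality incorrectly. The paper closes this case with Schulze-Pillot's result on primitive representations: for $p$ inert in $\Q(\sqrt{-tdM})$ one has $tp^2\notin\q^*(\spn(M))$, and since $t$ is squarefree an imprimitive representation of $tp^2$ by $M$ would force $t\in\q(M)$, a contradiction; hence infinitely many elements of $\mathcal{A}$ are missed by $M$. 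Correspondingly, the second clause of (iv) is pivotal rather than a small-exception fix: if $24\mathcal{F}(x,y,z)+a+2^rb+2^sc=\mathfrak{sf}(abc)$ has a solution then $t\in\q(M)$, every $tm^2$ is represented by $M$ by scaling the representing vector, and Duke--Schulze-Pillot yields almost universality; if it has no solution, the inert-prime argument above produces infinitely many failures. Without this dichotomy both directions of your argument are incomplete.
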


\begin{theorem}\label{Thm. C}
Suppose that $\mathcal{F}$ has no local obstruction, and assume that $r=s>0$ and $2\mid\nu_3(abc)$.
Then $\mathcal{F}$ is not almost universal if and only if all of the following are satisfied:

{\rm (i)} $r\ge2$, $a\equiv b\pmod4$ and $b\equiv c\pmod8$.

{\rm (ii)} Every prime factor of $\mathfrak{sf}(abc)$ is congruent to $1$ modulo $4$.

{\rm (iii)} $M_3\cong\langle u_1,3^{2i}u_2,3^{2j}u_3\rangle$, where $0\le i\le j$ are integers and
$u_1,u_2,u_3$ are $3$-adic units.

{\rm (iv)} $\mathfrak{sf}(abc)\equiv a+2^rb+2^sc\pmod{24}$, and the equation
$$24\mathcal{F}(x,y,z)+a+2^rb+2^sc=\mathfrak{sf}(abc)$$
has no integral solutions.
\end{theorem}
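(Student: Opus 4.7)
The plan is to follow the spinor-genus framework set up in Section 2, specialized to the case $r=s>0$ and $2\mid\nu_3(abc)$, in parallel with the strategy used for Theorems \ref{Thm. A}--\ref{Thm. B}. First I would use the reduction recalled in the introduction: $n$ is represented by $\mathcal{F}$ if and only if $N:=24n+a+2^rb+2^sc$ lies in $\q(\mathbf{v}+6L)$. Thus $\mathcal{F}$ fails to be almost universal exactly when infinitely many integers $N$ in the arithmetic progression $a+2^rb+2^sc\pmod{24}$ are represented by the genus of the coset $\mathbf{v}+6L\subset M$ but not by $\mathbf{v}+6L$ itself, that is, when $\mathbf{v}+6L$ admits an infinite family of primitive spinor exceptions. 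So the task is to characterize, in terms of $a,b,c,r$, when such exceptions exist and persist.

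Second, I would carry out the local analysis of $M_p$ and of the coset $\mathbf{v}+6L$ at every prime. Since $r=s$, the lattice $L=\langle a,2^rb,2^rc\rangle$ has both scaled coordinates in a single Jordan block at $p=2$, which forces the delicate $2$-adic computation. The conditions $r\ge 2$, $a\equiv b\pmod 4$, $b\equiv c\pmod 8$ in (i) are precisely those that make the $2$-adic spinor-norm group $\theta(O^+(M_2))$ omit the unit class needed so that coset exceptions persist at $2$. At $p=3$, the hypothesis $2\mid\nu_3(abc)$ together with the shape of $\mathbf{v}$ forces the diagonal form in (iii) with only even valuations, which is the standard shape admitting a primitive spinor exception supported at $3$. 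At an odd prime $p\ne 3$ with $p\mid\mathfrak{sf}(abc)$, the relevant Jordan block is a unit of odd scale, and primitive spinor exceptions at $p$ exist only when $-1\in(\Z_p^\times)^2$, that is, when $p\equiv 1\pmod 4$, which gives (ii). At all remaining primes, $M_p$ is essentially unimodular and contributes no obstruction.

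Third, I would establish both directions of the equivalence using this local data. For the necessity direction, assuming $\mathcal{F}$ is not almost universal, the Duke--Schulze-Pillot type results (in the coset version already invoked for Theorems \ref{Thm. A}--\ref{Thm. B}) imply that the infinite exceptional set for $\mathbf{v}+6L$ lies in a finite union of square classes of primitive spinor exceptions; the local incompatibilities above show that such a class exists only if (i), (ii), (iii) hold. The minimal eligible representative of the relevant square class is then, up to units, $\mathfrak{sf}(abc)$, which forces the congruence in (iv); and the equation in (iv) must have no integer solution, since otherwise the entire square class would already be globally represented by $\mathbf{v}+6L$ after one application of the spinor-norm action, contradicting infinitude of exceptions. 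For the sufficiency direction, given (i)--(iii) one assembles from the local data a nontrivial spinor-genus character $\chi$, and (iv) guarantees that $\chi$ is nontrivial on the genus class of $\mathfrak{sf}(abc)$; then standard Chebotarev input produces infinitely many primes $t$ such that $\mathfrak{sf}(abc)\cdot t^2$ is a primitive spinor exception, each giving a positive integer $n$ not represented by $\mathcal{F}$.

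The main obstacle will be the $2$-adic analysis. Because $r=s$, the block $\langle 2^rb,2^rc\rangle$ is a single Jordan component of $L_2$, so one must compute the Jordan splitting of $M_2=\Z_2\mathbf{v}+6L_2$ with care, determine $\theta(O^+(M_2))$, and pin down which residue classes of $(a,b,c)\pmod{16}$ leave $\mathbf{v}+6L$ in the spinor-exceptional orbit. This is exactly where the interlocked conditions $a\equiv b\pmod 4$ and $b\equiv c\pmod 8$ arise, and where the argument diverges from the $r<s$ cases treated in Theorems \ref{Thm. A}--\ref{Thm. A2}. The $3$-adic analysis and the analysis at odd primes $p\ne 3$ should largely parallel the corresponding steps in the proof of Theorem \ref{Thm. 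B}, so I expect no new difficulty there.
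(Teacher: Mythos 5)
Your overall strategy is the same as the paper's: reduce representability of $n$ to representability of $l(n)=24n+a+2^rb+2^sc$ by the lattice $M=\Z{\bf v}+6L$ (the paper shows $l(n)\in\q({\bf v}+6L)\Leftrightarrow l(n)\in\q(M)$ using $(6,a+2^rb+2^sc)=1$, so it works with $\gen(M)$ rather than a coset genus), identify failure of almost universality with the existence of a primitive spinor exception $t\in\mathcal{A}$, note that here $\Q(\sqrt{-tdM})=\Q(\sqrt{-1})$ so $t\in\mathfrak{sf}(abc)\Z^2$, and translate everything into local spinor-norm conditions via Earnest--Hsia--Hung, with Schulze-Pillot's results on $tp^2$ and Duke--Schulze-Pillot supplying the two implications; your treatment of (ii)--(iv) matches the paper's in substance.

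There is, however, a genuine gap exactly where you flag ``the main obstacle'': the $2$-adic computation, which is the entire source of condition (i), is asserted rather than performed, so the proposal in effect assumes the hardest part of the theorem. The paper's argument here is concrete: from (3.1) one gets $M_2^{1/\ve}\cong\langle1\rangle\perp 2^{r+2}b(a+2^rc)\langle1,abc\ve\rangle$, and then Earnest--Hsia \cite[1.2]{EH} is applied. If $M_2$ fails the hypothesis \cite[1.2(3)]{EH}, then $\theta(O^+(M_2))$ equals $\Q_2^{\times}$ or $\Z_2^{\times}\Q_2^{\times2}$ and cannot lie in ${\rm N}_{\Q_2(\sqrt{-1})/\Q_2}(\Q_2(\sqrt{-1})^{\times})$; requiring 1.2(3) already forces $r\ge2$ and a specific shape $\begin{pmatrix}\alpha&1\\1&2\beta\end{pmatrix}$ for the binary block, and then $\theta(O^+(M_2))=\{x:(x,-bc)_2=1\}$ together with the Hilbert-symbol condition in 1.2(3) pins down $bc\equiv1\pmod8$ (i.e.\ $b\equiv c\pmod 8$) and $a\equiv b\pmod4$. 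Your sketch replaces this with the claim that (i) is ``precisely'' the right condition, and your heuristic (``omit the unit class needed'') does not engage with the actual EHH criterion, which requires not only $\theta(O^+(M_p))\subseteq{\rm N}_{\Q_p(\sqrt{-tdM})/\Q_p}(\Q_p(\sqrt{-tdM})^{\times})$ but also the equality of this norm group with $\theta^*(M_p,t)$, checked via \cite{EHH}. Two smaller corrections: at $p=3$ the extension $\Q_3(\sqrt{-1})/\Q_3$ is unramified, so the analysis follows \cite[Theorem 1(a)]{EHH} as in the even-$\nu_3(abc)$ part of Theorem \ref{Thm. A}, not the ramified Kneser computation \cite[Satz 3]{Kn56} used in Theorem \ref{Thm. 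B} that you point to; and the sufficiency direction should invoke Schulze-Pillot \cite{SP00} (non-representation of $tp^2$ by the spinor genus, resp.\ primitive spinor genus, for split resp.\ inert $p$) together with (iv) guaranteeing $\mathfrak{sf}(abc)\notin\q(M)$, rather than a generic ``Chebotarev'' argument about a character being nontrivial on a genus class.
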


Now we state our last result.
\begin{theorem}\label{Thm. D}
Suppose that $\mathcal{F}$ has no local obstruction, and assume $r=s=0$. Then $\mathcal{F}$ is almost universal.
\end{theorem}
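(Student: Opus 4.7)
The plan is to recast the problem as representation by a coset of a ternary $\Z$-lattice and apply a Duke--Schulze-Pillot-type theorem, reducing almost universality to a spinor-norm calculation which the hypothesis $r=s=0$ makes tractable. From the discussion in Section~2, $n$ is represented by $\mathcal{F}$ iff $N:=24n+a+b+c$ lies in $\q(K)$, where $K:=\mathbf{v}+6L$ is the distinguished coset of $6L$ inside $M=\Z\mathbf{v}+6L$. Condition~$(*)$ with $r=s=0$ forces $(a+b+c,6)=1$, so every candidate $N$ is coprime to $6$ and sits in the single residue class $a+b+c\pmod{24}$.

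I would first inspect the local structures. For $p\nmid 6$, $M_p=L_p=\langle a,b,c\rangle$ and $K_p=L_p$, so local representability on $K_p$ reduces to representability by the diagonal form over $\Z_p$. At $p=3$, $M_3=\Z_3\mathbf{v}+3L_3$. Crucially, the hypothesis $r=s=0$ makes $a,b,c$ all $2$-adic units, so $L_2=\langle a,b,c\rangle$ is unimodular and $M_2=\Z_2\mathbf{v}+2L_2$; since $\q(\mathbf{v})=a+b+c$ is a $2$-adic unit, $\mathbf{v}$ splits off as a unimodular orthogonal summand of $M_2$, leaving a controlled $2$-modular binary complement. The no-local-obstruction hypothesis then says precisely that every sufficiently large $N\equiv a+b+c\pmod{24}$ lies in $\q(K_p)$ at every prime $p$.

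I would then invoke the coset version of the Duke--Schulze-Pillot theorem (in the form used by Chan--Oh, Kane--Sun, and the other papers on almost universal ternary polynomials cited in the introduction): every sufficiently large integer locally represented on $K$ at every prime is globally represented on some coset $K'$ in the spinor genus $\spn(K)$. To conclude almost universality for $\mathcal{F}$, it suffices to show $\spn(K)=\mathrm{cls}(K)$, and by strong approximation for the spin group this reduces to checking that the coset-stabilizing local spinor-norm groups $\theta(O^+(M_p,K_p))$, taken together, fill out the adelic units modulo squares and the global image.

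The main obstacle will be the dyadic spinor-norm calculation, and this is precisely where the hypothesis $r=s=0$ is essential. Unimodularity of $L_2$ means that each of the reflections $\tau_{\mathbf{e}_1},\tau_{\mathbf{e}_2},\tau_{\mathbf{e}_3},\tau_{\mathbf{v}}$ stabilizes both $M_2$ and the coset $K_2=\mathbf{v}+2L_2$ (they move $\mathbf{v}$ only by elements of $2L_2$); their pairwise products lie in $O^+(M_2,K_2)$ with spinor norms among $ab,ac,bc,a(a+b+c),b(a+b+c),c(a+b+c)$, and together with further rotations supported on the $2$-modular binary complement they generate a large enough subgroup of $\Q_2^\times/(\Q_2^\times)^2$. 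The spinor-norm analysis at $p=3$ and at the remaining primes dividing $abc$ is analogous but simpler, since $3$ is odd and the divisibility of $abc$ by $2$ and $3$ is controlled by~$(*)$. Combining the local contributions with global units via strong approximation yields $\spn(K)=\mathrm{cls}(K)$, and together with the local-representability step this proves that $\mathcal{F}$ represents all but finitely many positive integers.
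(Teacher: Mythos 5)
Your setup (passing to the coset $\mathbf{v}+6L$ inside $M=\Z\mathbf{v}+6L$ and aiming for a Duke--Schulze-Pillot argument) matches the paper's framework, but the analytic core of your plan is misstated in a way that skips the actual difficulty. The coset form of Duke--Schulze-Pillot does not say that every sufficiently large locally represented integer is represented by some member of $\spn(K)$: local representability only yields representation by the \emph{genus}, and the passage from the genus to the spinor genus is obstructed precisely by the primitive spinor exceptional integers, which your proposal never mentions. Controlling these exceptions is the entire content of the paper: the proof of Theorem \ref{Thm. D} consists in showing that for $r=s=0$ no primitive spinor exception of $\gen(M)$ lies in $\mathcal{A}=\{24n+a+b+c:n\ge0\}$. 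Concretely, by Lemma \ref{Lemma quadratic field} (and $abc$ odd) a primitive spinor exception $t$ would force $\Q(\sqrt{-tdM})$ to be $\Q(\sqrt{-1})$ or $\Q(\sqrt{-3})$; the paper then computes $M_2\cong\langle\ve\rangle\perp4\ve K$ with $K$ unimodular of norm $2\Z_2$ and uses \cite[Theorem 2(a)]{EHH} and \cite[1.2]{EH} to show that the spinor-norm condition (\ref{Eq. A}) fails at $p=2$ in both cases; then \cite[Corollary of Theorem 3]{DR} gives almost universality. Your substitute target, $\spn(K)=\mathrm{cls}(K)$ via strong approximation, is both unnecessary and unobtainable: \cite{DR} already passes from the spinor genus to the class for large integers, while strong approximation for a \emph{definite} ternary lattice controls the number of spinor genera in the genus, not the number of classes in a spinor genus, so no computation of local spinor norm groups can deliver $\spn=\mathrm{cls}$.

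Even the weaker reading of your plan --- make the local spinor norm groups fill out the adelic units so that the genus is a single spinor genus --- would fail under the hypotheses of the theorem: when $9\mid abc$ one can have $M_3\cong\langle u_1,3^iu_2,3^ju_3\rangle$ with $0<i<j$, and by Kneser's computation $\theta(O^+(M_3))$ then does not contain $\Z_3^\times$; yet Theorem \ref{Thm. D} is still true, because the obstruction is killed at $p=2$ alone. Your dyadic sketch is also insufficient at the one prime that matters: the symmetries $\tau_{\mathbf{e}_i},\tau_{\mathbf{v}}$ contribute only unit spinor norms ($ab$, $ac$, $bc$, $a\ve$, etc.), and every $2$-adic unit lies in the norm group of the unramified extension $\Q_2(\sqrt{-3})/\Q_2$, so these elements can never rule out the case $2\nmid\nu_3(abc)$. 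The decisive information comes from the complement of $\langle\ve\rangle$, which is $4$-modular of norm $8\Z_2$ (not $2$-modular as you wrote): its admissible symmetries have spinor norms of odd $2$-adic valuation, clashing with the even-order unimodular component, and that mixed-parity phenomenon is exactly what \cite[Theorem 2(a)]{EHH} detects in Case I of the paper's proof, with \cite[1.2]{EH} handling the ramified case $\Q_2(\sqrt{-1})$. As written, your argument would conclude without ever confronting spinor exceptions or these dyadic facts, so it does not prove the theorem.
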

In Section 2 we shall introduce some notations and make some preparations for our proofs. The detailed proofs will be given in Section 3.
\maketitle

\section{Notations and Some Preparations}
\setcounter{lemma}{0}
\setcounter{theorem}{0}
\setcounter{corollary}{0}
\setcounter{remark}{0}
\setcounter{equation}{0}
\setcounter{conjecture}{0}
In this paper, we adopt the language of quadratic spaces and lattices. Any unexplained notations can be found in \cite{C,Ki,Oto}. Let $p$ be an arbitrary prime. We first introduce the following notations.
\begin{itemize}
\item $\Q_p^{\times}=\{x\in\Q_p: x\ne 0\}$ and $\Q_p^{\times2}=\{x^2: x\in\Q_p^{\times}\}$.
\item $\Z_p^{\times}=\{x\in\Z_p: \text{$x$\ is\ invertible\ in $\Z_p$}\}$
and $\Z_p^{\times2}=\{x^2: x\in\Z_p^{\times}\}$.
\end{itemize}
Let $(V,\b,\q)$ be an arbitrary positive definite quadratic space over $\Q$ with symmetric bilinear map $\b$ and associated quadratic map $\q$. Given an arbitrary $\Z$-lattice $N$ contained in $V$,
we write $N\cong A$ if $A$ is the gram matrix for $N$ with respect to some basis. Furthermore,
a diagonal matrix with $a_1,a_2,\cdots,a_m$ as the diagonal entries is abbreviated as $\langle a_1,a_2,\cdots,a_m\rangle$. We also define
\begin{itemize}
\item $\q(N)=\{\q({\bf w}): {\bf w}\in N\}$ and $\q(N_p)=\{\q({\bf u}): {\bf u}\in N_p\}$.
\end{itemize}
A vector ${\bf u}\in N_p$ is called primitive if $p^{-1}\cdot{\bf u}\not\in N_p$. A vector ${\bf w}\in N$ is said to be primitive if it is primitive in $N_p$ for all primes $p$. We now define the following notations.
\begin{itemize}
\item $\q^*(N_p)=\{\q({\bf u}): \text{${\bf u}$\ is\ a\ primitive\ vector\ in\ $N_p$}\}$.
\item $\q^*(N)=\{\q({\bf w}): \text{${\bf w}$\ is\ a\ primitive\ vector\ in\ $N$}\}$.
\end{itemize}
For an arbitrary positive integer $t$, we say that $t$ can be represented by $\gen(N)$ if there is a lattice $N'\in\gen(N)$ such that $t\in \q(N')$. When this occurs, we write $t\in \q(\gen(N))$. Similarly, $t\in \q^*(\gen(N))$ means that there is a lattice $N''\in\gen(N)$ such that $t\in \q^*(N'')$. Also, when this occurs, we say that $t$ can be primitively represented by $\gen(M)$.
By \cite[Theorem 1.3, p. 129 and Theorem 5.1, p. 143]{C} we know that
$$t\in \q(\gen(N))\Leftrightarrow t\in \q(N_p)\ \text{for\ all\ primes\ $p$}.$$
and
$$t\in \q^*(\gen(N))\Leftrightarrow t\in \q^*(N_p)\ \text{for\ all\ primes\ $p$}.$$
Meanwhile, we say that $t\in\q(\spn(N))$ if there is a lattice $N_*\in\spn(N)$ such that $t\in\q(N_*)$. When this occurs, we say that $t$ can be represented by $\spn(N)$. Similarly, we say that $t$ can be primitively represented by $\spn(N)$ if there is a lattice $N_{**}\in\spn(N)$ such that $t\in\q^*(N_{**})$. When this happens, we write $t\in\q^*(\spn(N))$. For convenience, for each positive integer $n$ we let
\begin{itemize}
\item $l(n)=24n+a+2^rb+2^sc.$
\end{itemize}
As mentioned in Section 1, we see that $n$ can be represented by $\mathcal{F}$ if and only if there are $x,y,z\in\Z$ such that
$$l(n)=a(6x-1)^2+2^rb(6y-1)^2+2^sc(6z-1)^2.$$
With the notations in Section 1, recall that $(V,\b,\q)$ is a ternary quadratic space over $\Q$ with an orthogonal basis $\{{\bf e_1},{\bf e_2}, {\bf e_3}\}$ satisfying
$$\q({\bf e_1})=a,\ \q({\bf e_2})=2^rb,\ {\rm and}\ \q({\bf e_3})=2^sc.$$
Recall that
$$L=\Z{\bf e_1}+\Z{\bf e_2}+\Z{\bf e_3},$$
and
$${\bf v}=-({\bf e_1}+{\bf e_2}+{\bf e_3}).$$
As mentioned in Section 1 we obtain that $n$ can be represented by $\mathcal{F}$ if and only if
$$l(n)\in \q({\bf v}+6L),$$
where $\q({\bf v}+6L)=\{\q({\bf v}+6{\bf w}): {\bf w}\in L\}.$ Now we consider the $\Z$-lattice $M=\Z{\bf v}+6L$. Clearly $\{6{\bf e_1},6{\bf e_2}, {\bf v}\}$ is a $\Z$-basis of $M$ and the gram matrix of $M$ with respect to this basis is
$$\begin{pmatrix} 36a & 0 &-6a \\0 & 36\cdot2^rb &-6\cdot2^rb \\ -6a & -6\cdot2^rb &\ve \end{pmatrix},$$
where $\ve:=\q({\bf v})=a+2^rb+2^sc.$ The discriminant $dM$ of $M$ is equal to $6^4\cdot2^{r+s}abc$.

We now claim that
$$l(n)\in \q({\bf v}+6L)\Leftrightarrow l(n)\in \q(M).$$
It is enough to show the $`` \Leftarrow "$ part. In fact, suppose that $l(n)=\q(\lambda{\bf v}+6{\bf w})$ for some $\lambda\in\Z$ and some vector ${\bf w}\in L$. Then we have
$$\ve\equiv \lambda^2\ve\pmod 6.$$
Since $(6,\ve)=1$ by condition $(*)$, we have $\lambda\equiv\pm1\pmod6$. Noting that
$\q(\lambda{\bf v}+6{\bf w})=\q(-\lambda{\bf v}-6{\bf w})$, we therefore obtain $l(n)\in \q({\bf v}+6L)$.
This confirms our claim.

We now consider the local representation of $n$ by $\mathcal{F}$. Recall that $\mathcal{F}$ has no local obstruction if every positive integer can be represented by $\mathcal{F}$ over $\Z_p$ for all primes $p$. Clearly $\mathcal{F}$ has no local obstruction if and only if $l(n)\in \q(M_p)$ for all primes $p$ and all positive integers $n$. We begin with the following Lemma.
\begin{lemma}\label{Lemma local obstruction}
{\rm (i)} $\mathcal{F}(x,y,z)$ has no local obstruction if and only if for all primes $p\ne2,3$ we have
$$M_p\cong\langle1,-1,-dM\rangle.$$

{\rm (ii)} For all positive integers $n$, we have $l(n)\in \q^*(\gen(M))$.
\end{lemma}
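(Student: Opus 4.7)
The plan is to treat the primes $p\in\{2,3\}$ separately from the rest, exploiting the construction $M=\Z{\bf v}+6L$ which forces trivial local behavior at $2,3$ while reducing other primes to a density argument.

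For part (i), observe that at any prime $p\ne2,3$ one has $6\in\Z_p^\times$, so $M_p=\Z_p{\bf v}+6L_p=L_p\cong\langle a,2^rb,2^sc\rangle$. As $n$ ranges over $\Z^+$, the set $\{l(n)=24n+\ve\}$ is dense in $\Z_p$ since $24\in\Z_p^\times$, and $\q(M_p)$ is $p$-adically closed, so ``$l(n)\in\q(M_p)$ for all $n$'' becomes ``$M_p$ is $\Z_p$-universal''. The classical Jordan classification of universal ternary $\Z_p$-lattices for odd $p$ then forces the binary unimodular block of $M_p$ to be a hyperbolic plane $\langle1,-1\rangle$; combined with the discriminant constraint $dM$ this uniquely determines $M_p\cong\langle1,-1,-dM\rangle$, and conversely this form is patently isotropic and universal. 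To complete (i) I must show no obstruction at $p=2,3$: writing ${\bf u}={\bf v}+6{\bf w}$ with ${\bf w}\in L_p$ yields $\q({\bf u})=\ve+12\b({\bf v},{\bf w})+36\q({\bf w})$, so $l(n)\in\q(M_p)$ becomes $2n=\b({\bf v},{\bf w})+3\q({\bf w})$. Taking ${\bf w}=t{\bf e}_i$ for an index $i$ with $\q({\bf e}_i)$ a $p$-adic unit (available since $(a,b,c)=1$ at $p=3$ and $a$ is odd at $p=2$) linearizes this modulo $p$ with unit coefficient $\b({\bf v},{\bf e}_i)=-\q({\bf e}_i)$, and Hensel's lemma lifts any mod-$p$ solution to $\Z_p$.

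For part (ii), by the local-global principle for primitive genus-representations cited in Section~2, it suffices to show $l(n)\in\q^*(M_p)$ at every prime. At $p\in\{2,3\}$, condition $(*)$ gives $(6,\ve)=1$, so $l(n)\equiv\ve\pmod{24}$ lies in $\Z_p^\times$; any non-primitive representation ${\bf u}=p{\bf u}'\in pM_p$ would yield $l(n)\in p^2\Z_p$, a contradiction, so every representation at $p=2,3$ is automatically primitive. At $p\ne2,3$, under the standing hypothesis of no local obstruction, part (i) supplies a hyperbolic plane inside $M_p\cong\langle1,-1,-dM\rangle$; writing $l(n)=x^2-y^2+(-dM)z^2$ and choosing $z\in\{0,1\}$ so as to avoid the simultaneous degeneracies $x\equiv\pm y\pmod p$ exhibits a primitive vector representing $l(n)$.

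The principal technical hurdle lies in the Hensel argument at $p=2$: the mixed $2$-adic scales $2^r,2^s$ in the Jordan decomposition of $M_2$ render the naive lifting delicate, and one must carefully exploit the primitivity of ${\bf v}\in M$ together with the oddness of $a$ to guarantee a partial derivative that is a $2$-adic unit uniformly in $n$. Once this verification is in place, the remainder of the argument is essentially bookkeeping via Hensel lifting and the classical structure theory of $\Z_p$-lattices.
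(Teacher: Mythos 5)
Your proposal is correct, and its skeleton is the same as the paper's: analyze each completion $M_p$, show the primes $2,3$ never obstruct, show that at $p\ne 2,3$ universality of $M_p$ is equivalent to $M_p\cong\langle1,-1,-dM\rangle$, and deduce (ii) from the prime-by-prime primitive representations via the genus criterion quoted in Section 2. The differences are local. At $p=2,3$ the paper argues more slickly: since $(6,\ve)=1$, the local square theorem gives $l(n)=24n+\ve\in\ve\Z_p^{\times2}$, so $l(n)=\q(\eta_{p,n}\mathbf{v})$ with $\eta_{p,n}$ a unit, which simultaneously yields representability and primitivity (as $\mathbf{v}$ is a basis vector of $M$); you instead solve $3\q(\mathbf{e}_i)t^2-\q(\mathbf{e}_i)t=2n$ (i.e.\ $\q(\mathbf{e}_i)P_5(t)=n$) by Hensel along one coordinate and then get primitivity separately from $l(n)\in\Z_p^{\times}$ --- both work. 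At $p\ne2,3$ the paper gives a self-contained argument (split off $\langle1\rangle$ by O'Meara 82:15, then use the representation of $p$ to force the hyperbolic plane), while you invoke the classification of universal ternary $\Z_p$-lattices, odd $p$, together with the density/closedness observation; your version is fine but less self-contained. Finally, the ``principal technical hurdle'' you flag at $p=2$ is not actually there: with $\mathbf{w}=t\mathbf{e}_1$ the polynomial $f(t)=3at^2-at-2n$ has derivative $6at-a$, a $2$-adic unit for every $t\in\Z_2$, and $f(0)=-2n\equiv0\pmod 2$, so Newton--Hensel applies at once; the mixed scales $2^r,2^s$ never enter because you only move in the $\mathbf{e}_1$-direction, where the coefficient $a$ is odd.
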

\begin{proof}
{\rm (i)} We first consider the `` only if " part. Suppose that $\mathcal{F}$ has no local obstruction. Then $l(n)=24n+a+2^rb+2^sc\in \q(M_p)$ for all primes $p$. In particular, for each prime $p\ne 2,3$, we have $\q(M_p)=\Z_p$. This clearly implies that $M_p$ is isotropic for each prime $p\ne 2,3$. If $p\ne 2,3$ and $p\nmid d(M)$, then $M_p$ is a unimodular lattice. By \cite[92:1]{Oto} we have
$$M_p\cong\langle1,-1,-dM\rangle.$$
Assume now that $p\ne 2,3$ and $p\mid dM$. Since $1\in \q(M_p)$, by \cite[82:15]{Oto} we see that the sublattice $\langle1\rangle$ splits $M_p$, i.e., $M_p\cong\langle1\rangle\perp W_p$ for some binary lattice $W_p$. As $\Z_p^{\times}\subseteq \q(M_p)$, there must exist a unit $\ve_p\in\Z_p^{\times}$ such that $\ve_p\in \q(W_p)$. By \cite[82:15]{Oto} again we have
$$M_p\cong\langle1,\ve_p,\ve_pdM\rangle.$$
Since $p\in \q(M_p)$, it is easy to see that there are $x_p,y_p,z_p\in\Z_p$ with $x_p,y_p\in\Z_p^{\times}$ such that
$$p=x_p^2+\ve_py_p^2+\ve_pdMz_p^2.$$
This implies $0\equiv x_p^2+\ve_py_p^2\pmod{p\Z_p}$. By the local square theorem (cf. \cite[63:1]{Oto}) we obtain $-\ve_p\in\Z_p^{\times2}$ and hence
$$M_p\cong\langle1,-1,-dM\rangle.$$
This proves the `` only if " part.

We now consider the `` if " part. Suppose that
$$M_p\cong\langle1,-1,-dM\rangle$$
for all primes $p\ne2,3$. Clearly $\langle1,-1\rangle$ is a unimodular lattice over $\Z_p$ for each prime $p\ne2$. By \cite[92:1]{Oto} we have
$$\langle1,-1\rangle\cong\left(\begin{matrix}0 & 1\\ 1 & 0\end{matrix}\right).$$
Hence we obtain
$$M_p\cong\left(\begin{matrix}0 & 1\\ 1 & 0\end{matrix}\right)\perp\langle-dM\rangle.$$
Clearly this implies that $\q^*(M_p)=\Z_p$ for all primes $p\ne2,3$.

We now turn to the cases $p=2$ or $3$. Suppose that $p$ is either $2$ or $3$. As $(6,a+2^rb+2^sc)=1$, by the local square theorem we have
$$l(n)=24n+a+2^rb+2^sc\in(a+2^rb+2^sc)\Z_p^{\times2}.$$
for all positive integers $n$. Let $\eta_{p,n}\in\Z_p^{\times}$ such that $l(n)=(a+2^rb+2^sc)\eta_{p,n}^2$.
Then it is clear that
$$l(n)=\q(\eta_{p,n}\cdot{\bf v})\in \q^*(M_p).$$

{\rm (ii)} In view of the proof of {\rm (i)}, we see that {\rm (ii)} holds.

This completes the proof.
\end{proof}
We now give a brief discussion on spinor exceptions of $\gen(M)$. Suppose that a positive integer $t\in\q(\gen(M))$. We call $t$ a (primitive) spinor exception of $\gen(M)$ if $t$ is (primitively) represented by exactly half of the spinor genera in $\gen(M)$. In the fundamental paper \cite{Kn}, Kneser first investigated this topic. Later Schulze-Pillot \cite{SP80} obtained necessary and sufficient conditions for $t$ to be a spinor exception. Earnest, Hisa and Hung \cite{EHH} offered a characterization of primitive spinor exceptions, i.e., a positive integer $t\in\q(\gen(M))$ is a primitive spinor exception of $\gen(M)$ if and only if for all primes $p$ we have
\begin{equation}\label{Eq. A}
\theta(O^+(M_p))\subseteq {\rm N}_{\Q_p(\sqrt{-tdM})/\Q_p}(\Q_p(\sqrt{-tdM})^{\times})=\theta^*(M_p,t),
\end{equation}
where $\theta$ denotes the spinor norm map on the proper orthogonal group $O^+(M_p)$ and the symbol
${\rm N}_{\Q_p(\sqrt{-tdM})/\Q_p}(\Q_p(\sqrt{-tdM})^{\times})$ is the norm group of extension $\Q_p(\sqrt{-tdM})/\Q_p$, and $\theta^*(M_p,t)$ is the group generated by primitively relative spinor norms (for the precise definitions of these symbols, readers may consult \cite{EHH}).

Suppose that a square-free positive integer $t$ is a primitive spinor exception of $\gen(M)$ with $t\in\mathcal{A}$, where
$$\mathcal{A}:=\{24n+a+2^rb+2^sc: n\ge0\}.$$
Assume first $t\not\in\q(\spn(M))$. Let $p\nmid6abc$ be a prime that
splits in $\Q(\sqrt{-tdM})$. Then by \cite{SP00} we have $tp^2\not\in\q(\spn(M))$. Since there are infinitely many primes that split in $\Q(\sqrt{-tdM})$ and
$tp^2\in\mathcal{A}$ for all primes $p$ with $(p,6)=1$, we see that there are infinitely many positive integers $n$ such that $l(n)\not\in\q(M)$. Hence $\mathcal{F}$ is not almost universal. Assume now that
$t\in\q(\spn(M))$ and $t\not\in\q(M)$. Let $p\nmid6abc$ be a prime that is inert in $\Q(\sqrt{-tdM})$. By \cite{SP00} we know that $tp^2\not\in\q^*(\spn(M))$. As $t$ is square-free, we must have $tp^2\not\in\q(M)$. Since there are infinitely many primes that are inert in $\Q(\sqrt{-tdM})$, it is easy to see that $\mathcal{F}$ is not almost universal. On the other hand, by Lemma \ref{Lemma local obstruction} we know that
$$\mathcal{A}\subseteq\q^*(\gen(M)).$$
Suppose that every primitive spinor exception in $\mathcal{A}$ can be represented by $M$. Then by \cite[Corollary of Theorem 3]{DR} we obtain that $24n+a+2^rb+2^sc\in\q(M)$ if $n$ is sufficiently large. Hence $\mathcal{F}$ is almost universal.

We also need the following Lemma.
\begin{lemma}\label{Lemma quadratic field}
Suppose that $t\in\Z^+$ is a primitive spinor exception of $\gen(M)$. Then we have
$$\Q(\sqrt{-tdM})\in\{\Q(\sqrt{-D}): D=1,2,3,6\}.$$
\end{lemma}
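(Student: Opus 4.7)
The plan is to leverage the characterization \eqref{Eq. A} of primitive spinor exceptions: at every prime $p$ one has
$$\theta(O^+(M_p))\subseteq \theta^*(M_p,t) = {\rm N}_{\Q_p(\sqrt{-tdM})/\Q_p}(\Q_p(\sqrt{-tdM})^{\times}).$$
I intend to apply this inclusion only at primes $p\ne 2,3$, where Lemma \ref{Lemma local obstruction}(i) makes $M_p$ explicit, and to conclude that at each such $p$ the quadratic extension $\Q_p(\sqrt{-tdM})/\Q_p$ is either trivial or unramified. This will restrict the square-free kernel of $tdM$ to be supported on $\{2,3\}$ and thereby pin down $\Q(\sqrt{-tdM})$.

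First, I would fix a prime $p\ne 2,3$, set $k=\nu_p(dM)$, and write $dM=p^{k}w$ with $w\in\Z_p^\times$. By Lemma \ref{Lemma local obstruction}(i) we have $M_p\cong\langle 1,-1\rangle\perp\langle -p^{k}w\rangle$, so $M_p$ splits off a hyperbolic plane. A standard spinor-norm computation (cf.\ \cite{Kn} or \cite{EHH}) combines reflections inside the hyperbolic plane, whose norms exhaust $\Z_p^\times$ modulo squares, with the reflection by the generator of $\langle -p^{k}w\rangle$, whose spinor norm is $-p^{k}w$ modulo squares. I expect the resulting spinor norm group, modulo $(\Q_p^\times)^2$, to be
$$\theta(O^+(M_p))\equiv\begin{cases}\Z_p^\times(\Q_p^\times)^2 & \text{if $k$ is even,}\\ \Q_p^\times & \text{if $k$ is odd.}\end{cases}$$

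Next I would use that for odd $p$ the norm group of $\Q_p(\sqrt{-tdM})/\Q_p$ equals $\Q_p^\times$ exactly when the extension is trivial, equals the index-two subgroup $\Z_p^\times(\Q_p^\times)^2$ exactly when the extension is unramified quadratic, and otherwise misses some unit class. Feeding this into the displayed inclusion forces the local extension at $p$ to be trivial when $k$ is odd and at most unramified quadratic when $k$ is even. In either case we obtain $\nu_p(-tdM)\equiv 0\pmod 2$ for every prime $p\ne 2,3$. Consequently the square-free kernel of the negative integer $-tdM$ is of the form $-D$ with $D$ a positive square-free divisor of $6$, giving $D\in\{1,2,3,6\}$ as required.

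The main obstacle I anticipate is the spinor-norm computation in the non-unimodular case $k\ge 1$, which has to be done carefully because non-primitive reflection vectors and mixed reflections across the two summands must be accounted for; once $\theta(O^+(M_p))$ is identified as above, the ramification dichotomy and the prime-by-prime bookkeeping are routine. It is reassuring that the argument uses no local information at $2$ or $3$, which matches the fact that these are precisely the primes that may divide $D$.
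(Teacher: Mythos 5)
Your proposal is correct and takes essentially the same route as the paper: apply the criterion (\ref{Eq. A}) only at primes $p\ne 2,3$, use Lemma \ref{Lemma local obstruction} to get $M_p\cong\langle 1,-1,-dM\rangle$, note that the spinor norm group therefore contains $\Z_p^{\times}$, and conclude via local class field theory that $\Q_p(\sqrt{-tdM})/\Q_p$ is unramified for all $p\ne 2,3$, forcing $D\in\{1,2,3,6\}$. Your exact determination of $\theta(O^+(M_p))$ according to the parity of $\nu_p(dM)$ is more than is needed (the containment of the units already yields unramifiedness, which is all the paper uses), but it is accurate and harmless.
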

\begin{proof}
As $t$ is a primitive spinor exception of $\gen(M)$, by (\ref{Eq. A}) we have
$\theta(O^+(M_p))\subseteq {\rm N}_{\Q_p(\sqrt{-tdM})/\Q_p}(\Q_p(\sqrt{-tdM})^{\times})$. By Lemma \ref{Lemma local obstruction} we know that
$$M_p\cong\langle1,-1,-dM\rangle$$
for all primes $p\ne2,3$. Hence for all primes $p\ne2,3$ we have
$$\Z_p^{\times}\subseteq\theta(O^+(M_p))\subseteq {\rm N}_{\Q_p(\sqrt{-tdM})/\Q_p}(\Q_p(\sqrt{-tdM})^{\times}).$$
By local class field theory (cf. \cite[Proposition 1.7, p. 323]{N}), we see that the extension $\Q_p(\sqrt{-tdM})/\Q_p$ is unramified for each prime $p\ne2,3$. Hence we must have
$$\Q(\sqrt{-tdM})\in\{\Q(\sqrt{-D}): D=1,2,3,6\}.$$
This completes the proof.
\end{proof}

\section{Proofs}
\setcounter{lemma}{0}
\setcounter{theorem}{0}
\setcounter{corollary}{0}
\setcounter{remark}{0}
\setcounter{equation}{0}
\setcounter{conjecture}{0}
Before the proofs of Theorems \ref{Thm. A}--\ref{Thm. C}, we first consider the Jordan decomposition of $M_2$ in the case $r>0$. By computation, one can verify that for any $r\in\Z^+$ we have
\begin{equation}\label{Eq. M2}
M_2\cong\langle\ve,\ 2^{r+2}\ve b(a+2^sc),\ 2^{s+2}ac(a+2^sc)\rangle,
\end{equation}
where $\ve=a+2^rb+2^sc$.

{\bf Proof of Theorem \ref{Thm. A}.} We first prove the `` if " part. Suppose that ${\rm (i)}-{\rm (iv)}$ are satisfied. We shall show that $t=\mathfrak{sf}(abc)$ is a primitive spinor exception of $\gen(M)$. Since $r\equiv s\pmod2$ and $dM=6^4\cdot2^{r+s}abc$, by Lemma \ref{Lemma quadratic field} we have $\Q(\sqrt{-tdM})=\Q(\sqrt{-1})$ if $2\mid\nu_3(abc)$ and $\Q(\sqrt{-tdM})=\Q(\sqrt{-3})$ if $2\nmid\nu_3(abc)$.
By Lemma \ref{Lemma local obstruction} we have $t\in\q^*(\gen(M))$ since $t\equiv a+2^rb+2^sc\pmod {24}$.
We first consider the lattice $M_2$. As $r\equiv s\pmod2$ and $r>0$, we have $s\ge3$. Hence by the local square theorem we have
$$M_2\cong\langle\ve,2^{r+2}ab\ve,2^{s+2}c\rangle.$$
We assume first that $2\mid\nu_3(abc)$. Then $\Q(\sqrt{-tdM})=\Q(\sqrt{-1})$. As $r>1$, by \cite[Theorem 2.7]{EH} the spinor norm group of $\theta(O^+(M_2))$ can be obtained by the following way. Let
$$U\cong\langle1,2^{r+2}ab\rangle,\ \text{and}\ W\cong2^{r+2}ab\langle1,2^{s-r}abc\ve\rangle.$$
Then we have
$$\theta(O^+(M_2))=\theta(O^+(M_2^{1/\ve}))=\q(\mathcal{P}(U))\q(\mathcal{P}(W))\Q_2^{\times2},$$
where $\mathcal{P}(U)$ (resp. $\mathcal{P}(W)$) is the set of primitive anisotropic vectors ${\bf z}$ in $U$ (resp. $W$) whose associated symmetries $\tau_{\bf z}$ are contained in $O(U)$ (resp. $O(W)$). We first consider $\q(\mathcal{P}(U))$. Suppose that ${\bf x},{\bf y}\in U$ is a $\Z_2$-basis of $U$ with $\q({\bf x})=1$ and $\q({\bf y})=2^{r+2}ab$. Then for any $\sigma\in O^+(U)$, by \cite[43:3b]{Oto} there is an anisotropic primitive vector ${\bf z}$ such that $\sigma=\tau_{\bf x}\tau_{\bf z}$. It is easy to see that $\tau_{\bf z}\in O(U)$, i.e., ${\bf z}\in\mathcal{P}(U)$. Hence we have $\theta(O^+(U))\subseteq\q(\mathcal{P}(U))\Q_2^{\times2}$. Conversely, for each ${\bf z}\in\mathcal{P}(U)$, we have $\tau_{\bf x}\tau_{\bf z}\in O^+(U)$ and hence  $\q(\mathcal{P}(U))\Q_2^{\times2}\subseteq\theta(O^+(U))$. We therefore have
$$\q(\mathcal{P}(U))\Q_2^{\times2}=\theta(O^+(U)).$$
Similarly, we also have
$$\q(\mathcal{P}(W))\Q_2^{\times2}=2^{r+2}ab\cdot\theta(O^+(W)).$$
Moreover, the formulae for the spinor norm group of non-modular binary $\Z_2$-lattice can be found in \cite[1.9]{EH75}. From this we obtain that $\q(\mathcal{P}(U))\Q_2^{\times2}$ is equal to
$$\theta(O^+(\langle1,2^{r+2}ab\rangle))
=\begin{cases}\Q_2^{\times2}\cup5\Q_2^{\times2}\cup ab\Q_2^{\times2}\cup5ab\Q_2^{\times2}&\mbox{if}\ r=2,\\\Q_2^{\times2}\cup 2^rab\Q_2^{\times2}&\mbox{if}\ r\ge3,\end{cases}$$
and that $\q(\mathcal{P}(W))\Q_2^{\times2}=2^rab\cdot\theta(O^+(\langle1,2^{s-r}abc\ve\rangle))$ is equal to
$$2^rab\cdot\begin{cases}\{\gamma\in\Z_2^{\times}\Q_2^{\times2}: (\gamma,-abc\ve)_2=1\}&\mbox{if}\ s-r=2,\\\Q_2^{\times2}\cup5\Q_2^{\times2}\cup abc\ve\Q_2^{\times2}\cup 5abc\ve\Q_2^{\times2}&\mbox{if}\ s-r=4,\\\Q_2^{\times2}\cup abc\ve\Q_2^{\times2}&\mbox{if}\ s-r\ge5,\end{cases}$$
where $(\ ,\ )_2$ is the quadratic Hilbert symbol over $\Q_2$ (readers may refer to \cite[Chapter V, Section 3]{N} for details). If {\rm (i)} holds, then one can easily verify that $\theta(O^+(M_2))\subseteq {\rm N}_{\Q_2(\sqrt{-1})/\Q_2}(\Q_2(\sqrt{-1})^{\times})$.
By \cite[Theorem 2(b)]{EHH} and the above, we further have
\begin{equation}\label{Eq. spinor M2 in Thm.A}
\theta(O^+(M_2))\subseteq {\rm N}_{\Q_2(\sqrt{-1})/\Q_2}(\Q_2(\sqrt{-1})^{\times})=\theta^*(M_2,t).
\end{equation}
Assume now that $2\nmid\nu_3(abc)$. Then $\Q(\sqrt{-tdM})=\Q(\sqrt{-3})$. As the extension $\Q_2(\sqrt{-3})/\Q_2$ is unramified, by \cite[Theorem 2(a)]{EHH} we have
\begin{equation}\label{Eq. spinor norm M_2 unramified case in Thm. A}
\theta(O^+(M_2))\subseteq {\rm N}_{\Q_2(\sqrt{-3})/\Q_2}(\Q_2(\sqrt{-3})^{\times})=\theta^*(M_2,t)
\end{equation}
if and only if $r\equiv s\equiv 0\pmod2$.

Now we consider the lattices $M_p$ for all primes $p\ne2,3$. If $p\nmid\mathfrak{sf}(abc)$ and $p\ne2,3$, then by Lemma \ref{Lemma local obstruction} we must have
$$M_p\cong\langle1,-1,p^{2j}u_p\rangle$$
for some $j\ge0$ and some $u_p\in\Z_p^{\times}$. Hence by \cite[Theorem 1(a)]{EHH} we have
\begin{equation}\label{Eq. spinor Mp in Thm.A}
\theta(O^+(M_p))\subseteq {\rm N}_{\Q_p(\sqrt{-tdM})/\Q_p}(\Q_p(\sqrt{-tdM})^{\times})=\theta^*(M_p,t).
\end{equation}
Assume now that $p>3$ and $p\mid\mathfrak{sf}(abc)$. Since ${\rm (ii)}$ holds, we have
$-tdM\in\Q_p^{\times2}$. When this occurs, we clearly have
\begin{equation}\label{Eq. spinor Mp 2 in Thm.A}
\theta(O^+(M_p))\subseteq{\rm N}_{\Q_p(\sqrt{-tdM})/\Q_p}(\Q_p(\sqrt{-tdM})^{\times})=\theta^*(M_p,t)=\Q_p^{\times}
\end{equation}
Now we consider the lattice $M_3$. We first assume $2\mid\nu_3(abc)$. Clearly $\Q(\sqrt{-tdM})=\Q(\sqrt{-1})$ and $\Q_3(\sqrt{-1})/\Q_3$ is unramified. Hence by \cite[Theorem 1(a)]{EHH} we obtain that
$$\theta(O^+(M_3))\subseteq{\rm N}_{\Q_3(\sqrt{-1})/\Q_3}(\Q_3(\sqrt{-1})^{\times})=\theta^*(M_3,t)$$ if and only if
$M_3\cong\langle u_1,3^{2i}u_2,3^{2j}u_3\rangle$ for some $u_1,u_2,u_3\in\Z_3^{\times}$ and $0\le i\le j$. Assume now $2\nmid\nu_3(abc)$. Then we have $\Q(\sqrt{-tdM})=\Q(\sqrt{-3})$. As
$$M_3\cong\langle u_1,3^iu_2,3^ju_3\rangle$$
with $0<i<j$ and $u_1,u_2,u_3\in\Z^{\times}_3$ satisfying $u_1\equiv u_2\equiv u_3\pmod{3\Z_3}$, by
\cite[Satz 3]{Kn56} we have
\begin{align*}
\theta(O^+(M_3))
&=\Q_3^{\times2}\cup3^iu_1u_2\Q_3^{\times2}\cup3^ju_1u_3\Q_3^{\times2}\cup3^{i+j}u_2u_3\Q_3^{\times2}\\
&\subseteq\{1,3\}\Q_3^{\times2}={\rm N}_{\Q_3(\sqrt{-3})/\Q_3}(\Q_3(\sqrt{-3})^{\times}).
\end{align*}
By \cite[Theorem 1(b)]{EHH} we further have
\begin{equation}\label{Eq. spinor M3 in Thm.A}
\theta(O^+(M_3))\subseteq{\rm N}_{\Q_3(\sqrt{-3})/\Q_3}(\Q_3(\sqrt{-3})^{\times})=\theta^*(M_3,t).
\end{equation}
In view of (\ref{Eq. spinor M2 in Thm.A})--(\ref{Eq. spinor M3 in Thm.A}), we obtain that $t=\mathfrak{sf}(abc)$ is a primitive spinor exception of $\gen(M)$ by (\ref{Eq. A}). Moreover, since the equation
$$24\mathcal{F}(x,y,z)+a+2^rb+2^sc=\mathfrak{sf}(abc)$$
has no integral solutions, we have $t\not\in\q(M)$. Hence by the discussion below the proof of Lemma \ref{Lemma local obstruction}, we see that $\mathcal{F}$ is not almost universal.

We now prove the `` only if " part. Suppose that $\mathcal{F}$ is not almost universal. Then by the discussion below the proof of Lemma \ref{Lemma local obstruction}, there must exist at least one primitive spinor exception in $\mathcal{A}=\{24n+a+2^rb+2^sc: n\ge0\}$. Let $t'\in\mathcal{A}$ be an arbitrary  primitive spinor exception of $\gen(M)$. By Lemma \ref{Lemma quadratic field} we have
$$\Q(\sqrt{-t'dM})=\begin{cases}\Q(\sqrt{-1})&\mbox{if}\ 2\mid\nu_3(abc),\\\Q(\sqrt{-3})&\mbox{if}\  2\nmid\nu_3(abc).\end{cases}$$
We now consider $M_2$. As in the `` if " part, we have
$$M_2\cong\langle\ve,2^{r+2}ab\ve,2^{s+2}c\rangle.$$
Assume first $2\nmid\nu_3(abc)$, i.e., $\Q(\sqrt{-t'dM})=\Q(\sqrt{-3})$. As $\Q_2(\sqrt{-3})/\Q_2$ is unramified, by \cite[Theorem 2(a)]{EHH} we have
$$\theta(O^+(M_2))\subseteq{\rm N}_{\Q_2(\sqrt{-3})/\Q_2}(\Q_2(\sqrt{-3})^{\times})=\theta^*(M_2,t)$$
if and only if the Jordan components of $M_2$ have either all even or all odd orders
(a $\Z_2$-lattice $N$ has even order if $\nu_2(\q({\bf x}))$ is even for every
primitive vector ${\bf x}\in N$ with its associated symmetry $\tau_{{\bf x}}\in O(N)$, and $N$ is
of odd order if $\nu_2(\q({\bf x}))$ are all odd for such ${\bf x}$). This is equivalent to $r\equiv s\equiv0\pmod2$. Hence the second part of {\rm (i)} holds.
Assume now that $2\mid\nu_3(abc)$. Then $\Q(\sqrt{-t'dM})=\Q(\sqrt{-1})$.
As $$M_2\cong\langle\ve,2^{r+2}ab\ve,2^{s+2}c\rangle.$$
If $r=1$, then by \cite[Theorem 2(b)(iv)]{EHH} we have either $\theta(O^+(M_2))\nsubseteq{\rm N}_{\Q_2(\sqrt{-1})/\Q_2}(\Q_2(\sqrt{-1})^{\times})$ or ${\rm N}_{\Q_2(\sqrt{-1})/\Q_2}(\Q_2(\sqrt{-1})^{\times})\ne\theta^*(M_2,t')$. This is a contradiction. Hence $r\ge2$. Adopting the notations in the `` if " part, we have
$$\theta(O^+(M_2))=\theta(O^+(M_2^{1/\ve}))=\q(\mathcal{P}(U))\q(\mathcal{P}(W))\Q_2^{\times2}.$$
Similar to the `` if " part,
$\q(\mathcal{P}(U))\Q_2^{\times2}$ is equal to
$$\theta(O^+(\langle1,2^{r+2}ab\rangle))
=\begin{cases}\Q_2^{\times2}\cup5\Q_2^{\times2}\cup ab\Q_2^{\times2}\cup5ab\Q_2^{\times2}&\mbox{if}\ r=2,\\\Q_2^{\times2}\cup 2^rab\Q_2^{\times2}&\mbox{if}\ r\ge3,\end{cases}$$
and that $\q(\mathcal{P}(W))\Q_2^{\times2}=2^rab\cdot\theta(O^+(\langle1,2^{s-r}abc\ve\rangle))$ is equal to
$$2^rab\cdot\begin{cases}\{\gamma\in\Z_2^{\times}\Q_2^{\times2}: (\gamma,-abc\ve)_2=1\}&\mbox{if}\ s-r=2,\\\Q_2^{\times2}\cup5\Q_2^{\times2}\cup abc\ve\Q_2^{\times2}\cup 5abc\ve\Q_2^{\times2}&\mbox{if}\ s-r=4,\\\Q_2^{\times2}\cup abc\ve\Q_2^{\times2}&\mbox{if}\ s-r\ge5.\end{cases}$$
As $\theta(O^+(M_2))\subseteq{\rm N}_{\Q_2(\sqrt{-1})/\Q_2}(\Q_2(\sqrt{-1})^{\times})$, we must have $ab\equiv1\pmod4$. By this it is easy to see that $\theta(O^+(M_2))\subseteq{\rm N}_{\Q_2(\sqrt{-1})/\Q_2}(\Q_2(\sqrt{-1})^{\times})$ if and only if $abc\ve\equiv1\pmod4$. As $r\ge2$, we obtain $ac\equiv1\pmod4$. This implies the first part of {\rm (i)}.

Now we consider $M_p$ with prime $p\ne3$ and $p\mid\mathfrak{sf}(abc)$. By Lemma \ref{Lemma local obstruction}, we have
$$M_p\cong\langle1,\ -1,\ p^{2j+1}u_p\rangle$$
for some $j\ge0$ and some $u_p\in\Z_p^{\times}$. If $-t'dM\not\in\Q_p^{\times2}$, then by \cite[Theorem 1(a)]{EHH} we have $\theta(O^+(M_p))\nsubseteq{\rm N}_{\Q_p(\sqrt{-t'dM})/\Q_p}(\Q_p(\sqrt{-t'dM})^{\times})$, which contradicts (\ref{Eq. A}). Hence $-t'dM\in\Q_p^{\times2}$ for all prime factor $p\ne3$ of $\mathfrak{sf}(abc)$. This implies ${\rm (ii)}$.

We now turn to $M_3$. Suppose first $2\mid\nu_3(abc)$. Then $\Q(\sqrt{-t'dM})=\Q(\sqrt{-1})$. As $\Q_3(\sqrt{-1})/\Q_3$ is unramified, by \cite[Theorem 1(a)]{EHH}
$$\theta(O^+(M_3))\subseteq{\rm N}_{\Q_3(\sqrt{-1})/\Q_3}(\Q_3(\sqrt{-1})^{\times})=\theta^*(M_3,t')$$
if and only if the first part of {\rm (iii)} holds. Suppose now $2\nmid\nu_3(abc)$. Then $\Q(\sqrt{-t'dM})=\Q(\sqrt{-3})$. By \cite[Theorem 1(b)]{EHH} we have
$$M_3\cong\langle u_1,3^iu_2,3^ju_3\rangle$$
with $u_1,u_2,u_3\in\Z_3^{\times}$ and $0<i<j$. By \cite[Satz 3]{Kn56} we have
$$\theta(O^+(M_3))
=\Q_3^{\times2}\cup3^iu_1u_2\Q_3^{\times2}\cup3^ju_1u_3\Q_3^{\times2}\cup3^{i+j}u_2u_3\Q_3^{\times2}.$$
As $\theta(O^+(M_3))\subseteq{\rm N}_{\Q_3(\sqrt{-3})/\Q_3}(\Q_3(\sqrt{-3})^{\times})=\{1,3\}\Q_3^{\times2}$, the second part of {\rm (iii)} holds.

Suppose now that $\mathfrak{sf}(abc)\not\equiv a+2^rb+2^sc\pmod{24}$, i.e., $\mathfrak{sf}(abc)\not\in\mathcal{A}$.
As mentioned above, we know that each primitive spinor exceptions $t'$ with $(6,t')=1$ are contained in $\mathfrak{sf}(abc)\Z^2$. Since $\mathfrak{sf}(abc)\not\in\mathcal{A}$, we have $t'\not\in\mathcal{A}$. By the discussion below the proof of Lemma \ref{Lemma local obstruction}, we see that $\mathcal{F}$ is almost universal, which is a contradiction. Hence we must have $\mathfrak{sf}(abc)\equiv a+2^rb+2^sc\pmod{24}$. Moreover, assume that the equation
$$24\mathcal{F}(x,y,z)+a+2^rb+2^sc=\mathfrak{sf}(abc)$$
has integral solutions.
Then it is clear that all primitive spinor exceptions in $\mathcal{A}$ can be represented by $M$. By the discussion below the proof of Lemma \ref{Lemma local obstruction}, we obtain that $\mathcal{F}$ is almost universal. This is a contradiction. In view of the above, ${\rm (iv)}$ holds.

This completes the proof of Theorem \ref{Thm. A}.\qed

{\bf Proof of Theorem \ref{Thm. A1}.} We first consider the `` if " part. Suppose that ${\rm (i)}-{\rm (iv)}$ are satisfied. We shall show that $t=\mathfrak{sf}(abc)$ is a primitive spinor exception of $\gen(M)$. Since $r\not\equiv s\pmod2$, $2\mid\nu_3(abc)$ and $dM=6^4\cdot2^{r+s}abc$, by Lemma \ref{Lemma quadratic field} we have $\Q(\sqrt{-tdM})=\Q(\sqrt{-2})$. By Lemma \ref{Lemma local obstruction} we have $t\in\q^*(\gen(M))$ since $t\equiv a+2^rb+2^sc\pmod {24}$.

We first consider $M_2$. By (\ref{Eq. M2}) we have
$$M_2^{1/\ve}\cong\langle1,2^{r+2}b(a+2^sc),2^{s+2}ac\ve(a+2^sc)\rangle.$$
Suppose first that {\rm ($\alpha$)} of {\rm (i)} holds. Then by the local square theorem we have
$$M_2^{1/\ve}\cong\langle1,2^{3}ab,2^{s+2}c\ve\rangle.$$
Let $U\cong\langle1,2^3ab\rangle$ and $W=2^3ab\langle1,2^{s-1}abc\ve\rangle$.
As in the proof of Theorem \ref{Thm. A}, we have
$$\theta(O^+(M_2))=\q(\mathcal{P}(U))\q(\mathcal{P}(W))\Q_2^{\times2}.$$
Also, one can verify that
$$\q(\mathcal{P}(U))\Q_2^{\times2}=\theta(O^+(U))=\{\gamma\in\Q_2^{\times}: (\gamma,-2ab)_2=1\},$$
and that $\q(\mathcal{P}(W))\Q_2^{\times2}$ is equal to
$$2ab\cdot\theta(O^+(W))=
2ab\cdot\begin{cases}\{\gamma\in\Q_2^{\times}: (\gamma,-2abc\ve)_2=1\}&\mbox{if}\ s=4,\\\Q_2^{\times2}\cup 2abc\ve\Q_2^{\times2}&\mbox{if}\ s\ge6.\end{cases}$$
Clearly $$\theta(O^+(M_2))=
\q(\mathcal{P}(U))\q(\mathcal{P}(W))\Q_2^{\times2}={\rm N}_{\Q_2(\sqrt{-2})/\Q_2}(\Q_2(\sqrt{-2})^{\times})$$
if {\rm ($\alpha$)} of {\rm (i)} holds. Assume now that {\rm ($\beta$)} of {\rm (i)} holds. Then
$$M_2^{1/\ve}\cong\langle1,2^{r+2}ab,2^{s+2}c\ve\rangle.$$
Letting $U\cong\langle1,2^{r+2}ab\rangle$, $W\cong2^{r+2}ab\langle1,2^{s-r}abc\ve\rangle$, one can easily verify that
$$\theta(O^+(M_2))=\q(\mathcal{P}(U))\q(\mathcal{P}(W))\Q_2^{\times2}.$$
Also, we have
$$\q(\mathcal{P}(U))\Q_2^{\times2}=\theta(O^+(U))=\Q_2^{\times2}\cup 2^rab\Q_2^{\times2},$$
and $\q(\mathcal{P}(W))\Q_2^{\times2}$ is equal to
$$2^rab\cdot\theta(O^+(W))=
2^rab\cdot\begin{cases}\{\gamma\in\Q_2^{\times}:(\gamma,-2abc\ve)_2=1\}&\mbox{if}\ s-r=1,3,\\\Q_2^{\times2}\cup 2abc\ve\Q_2^{\times2}&\mbox{if}\ s-r\ge5.\end{cases}$$
Clearly $$\theta(O^+(M_2))=\q(\mathcal{P}(U))\q(\mathcal{P}(W))\Q_2^{\times2}\subseteq{\rm N}_{\Q_2(\sqrt{-2})/\Q_2}(\Q_2(\sqrt{-2})^{\times})$$
if {\rm ($\beta$)} of {\rm (i)} holds. In view of the above, by \cite[Theorem 2(c)]{EHH} we have
\begin{equation}\label{Eq. spinor norm of M2 in Thm. A1}
\theta(O^+(M_2))\subseteq{\rm N}_{\Q_2(\sqrt{-2})/\Q_2}(\Q_2(\sqrt{-2})^{\times})=\theta^*(M_2,t).
\end{equation}

Now we consider $M_p$ with $p\ne2,3$ and $p\nmid\mathfrak{sf}(abc)$. With the essentially same reason as in the proof of Theorem \ref{Thm. A}, we have
\begin{equation}\label{Eq. spinor Mp in Thm.A1}
\theta(O^+(M_p))\subseteq {\rm N}_{\Q_p(\sqrt{-tdM})/\Q_p}(\Q_p(\sqrt{-tdM})^{\times})=\theta^*(M_p,t).
\end{equation}
Also, similar to the proof of Theorem \ref{Thm. A}, we see that for each prime factor $p\ne3$ of $\mathfrak{sf}(abc)$,
\begin{equation}\label{Eq. Mp 2 in Thm. A1}
\theta(O^+(M_p))\subseteq {\rm N}_{\Q_p(\sqrt{-tdM})/\Q_p}(\Q_p(\sqrt{-tdM})^{\times})=\theta^*(M_p,t),
\end{equation}
if and only if $-tdM\in\Q_p^{\times2}$, i.e., $-2$ is a quadratic residue modulo $p$.

Now we consider $M_3$. As $\Q_3(\sqrt{-2})/\Q_3$ is unramified, by \cite[Theorem 1(a)]{EHH} we have
\begin{equation}\label{Eq. M3 in Thm. A1}
\theta(O^+(M_3))\subseteq {\rm N}_{\Q_3(\sqrt{-tdM})/\Q_3}(\Q_3(\sqrt{-tdM})^{\times})=\theta^*(M_3,t),
\end{equation}
if and only if {\rm (iii)} holds. In view of (\ref{Eq. spinor norm of M2 in Thm. A1})--(\ref{Eq. M3 in Thm. A1}), we obtain that $t=\mathfrak{sf}(abc)$ is a primitive spinor exception of $\gen(M)$ by (\ref{Eq. A}). Moreover, since the equation
$$24\mathcal{F}(x,y,z)+a+2^rb+2^sc=\mathfrak{sf}(abc)$$
has no integral solutions, we have $t\not\in\q(M)$. Hence by the discussion below the proof of Lemma \ref{Lemma local obstruction}, we see that $\mathcal{F}$ is not almost universal.

We now show the `` only if " part. Suppose that $\mathcal{F}$ is not almost universal. Then by the discussion below the proof of Lemma \ref{Lemma local obstruction}, there must exist at least one primitive spinor exception in $\mathcal{A}=\{24n+a+2^rb+2^sc: n\ge0\}$. Let $t'\in\mathcal{A}$ be an arbitrary  primitive spinor exception of $\gen(M)$. By Lemma \ref{Lemma quadratic field} we have
$\Q(\sqrt{-t'dM})=\Q(\sqrt{-2})$. As $\Q_3(\sqrt{-2}/\Q_3)$ is unramified, by \cite[Theorem 1(a)]{EHH} we obtain that $\theta(O^+(M_3))\subseteq{\rm N}_{\Q_3(\sqrt{-2})/\Q_3}(\Q_3(\sqrt{-2})^{\times})$ implies that {\rm (iii)} holds. Moreover, for any prime factor $p\ne3$ of $\mathfrak{sf}(abc)$, by \cite[Theorem 1(a)]{EHH} again $\theta(O^+(M_p))\subseteq{\rm N}_{\Q_p(\sqrt{-2})/\Q_p}(\Q_p(\sqrt{-2})^{\times})$ implies $-t'dM\in\Q_p^{\times2}$, i.e., {\rm (ii)} holds.

Now we consider $M_2$. By (\ref{Eq. M2}) we have
$$M_2^{1/\ve}\cong\langle1,2^{r+2}b(a+2^sc),2^{s+2}ac\ve(a+2^sc)\rangle.$$
Let $$U\cong\langle1,2^{r+2}b(a+2^sc)\rangle,\ \text{and}\  W\cong2^{r+2}b(a+2^sc)\langle1,2^{s-r}abc\ve\rangle.$$
Suppose first $r>2$. Then by \cite[Theorem 2.7]{EH} we have
$\theta(O^+(M_2))=\q(\mathcal{P}(U))\q(\mathcal{P}(W))\Q_2^{\times2}\subseteq\{1,2,3,6\}\Q_2^{\times2}$. As in the `` if " part, we have $$\q(\mathcal{P}(U))\Q_2^{\times2}=\theta(O^+(U))=\Q_2^{\times2}\cup 2^rab\Q_2^{\times2},$$
and $\q(\mathcal{P}(W))\Q_2^{\times2}$ is equal to
$$2^rab\cdot\theta(O^+(W))=
2^rab\cdot\begin{cases}\{\gamma\in\Q_2^{\times}:(\gamma,-2abc\ve)_2=1\}&\mbox{if}\ s-r=1,3,\\\Q_2^{\times2}\cup 2abc\ve\Q_2^{\times2}&\mbox{if}\ s-r\ge5.\end{cases}$$
This gives $2^rab\in\{1,2,3,6\}\Q_2^{\times2}$. Hence $ab\equiv1,3\pmod8$. By this $\q(\mathcal{P}(U))\q(\mathcal{P}(W))\Q_2^{\times2}\subseteq\{1,2,3,6\}\Q_2^{\times2}$ implies that
$abc\ve\equiv1\pmod8$ if $s-r=1,3$, and that $abc\ve\equiv1,3\pmod8$ if $s-r\ge5$. This exactly shows that {\rm $(\beta)$} of {\rm (i)} holds.

Suppose now $r=2$. If $s=3$ or $5$, then by \cite[1.1]{EH} we have $\theta(O^+(M_2))=\Q_2^{\times}\not\subseteq\{1,2,3,6\}\Q_2^{\times2}$. Hence $s\ge7$ is odd. By \cite[Theorem 2.7]{EH} we also have
$\theta(O^+(M_2))=\q(\mathcal{P}(U))\q(\mathcal{P}(W))\Q_2^{\times2}\subseteq\{1,2,3,6\}\Q_2^{\times2}$. Note that
$$\q(\mathcal{P}(U))\Q_2^{\times2}=\theta(O^+(U))=\Q_2^{\times2}\cup ab\Q_2^{\times2}\cup 5ab\Q_2^{\times2}\cup 5\Q_2^{\times2},$$
and that $\q(\mathcal{P}(W))\Q_2^{\times2}$ is equal to
$$2^4ab\cdot\theta(O^+(W))=
ab\Q_2^{\times2}\cup 2c\ve\Q_2^{\times2}.$$
This gives $5\in\{1,2,3,6\}\Q_2^{\times2}$, which is a contradiction. Hence $r\ne2$.

Now we consider the case $r=1$. We have
$$M_2^{1/\ve}\cong\langle1,2^3b(a+2^sc),2^{s+2}ac\ve(a+2^sc)\rangle.$$
If $s=2$, then by \cite[1.1]{EH} we have $\theta(O^+(M_2))=\Q_2^{\times}\not\subseteq\{1,2,3,6\}\Q_2^{\times2}$. Hence $s\ge4$ is even. Similar to the above, we have
$$\q(\mathcal{P}(U))=\theta(O^+(U))=\{\gamma\in\Q_2^{\times}: (\gamma,-2ab)_2\}$$
and
$\q(\mathcal{P}(W))\Q_2^{\times2}$ is equal to
$$2^3ab\cdot\theta(O^+(W))=
2^3ab\cdot\begin{cases}\{\gamma\in\Q_2^{\times}:(\gamma,-2abc\ve)_2=1\}&\mbox{if}\ s=4,\\\Q_2^{\times2}\cup 2abc\ve\Q_2^{\times2}&\mbox{if}\ s\ge6.\end{cases}$$
This gives $2ab\in\{1,2,3,6\}\Q_2^{\times2}$. Hence $ab\equiv1,3\pmod8$. If $ab\equiv3\pmod8$, then $-1\in\q(\mathcal{P}(U))$ and hence $-ab\in\{1,2,3,6\}$. This is a contradiction. Hence we must have
$ab\equiv1\pmod8$. From this $\q(\mathcal{P}(U))\q(\mathcal{P}(W))\Q_2^{\times2}\subseteq\{1,2,3,6\}\Q_2^{\times2}$ implies that
$abc\ve\equiv1\pmod8$ if $s=4$, and that $abc\ve\equiv1,3\pmod8$ if $s\ge6$. This precisely shows that {\rm $(\alpha)$} of {\rm (i)} holds. With the same reason as in the proof of Theorem \ref{Thm. A}, the condition {\rm(iv)} holds.

In view of the above, we complete the proof. \qed

{\bf Proof of Theorem \ref{Thm. A2}.} We first prove the `` if " part. As in the proof of Theorem \ref{Thm. A}, we shall show that $t=\mathfrak{sf}(abc)$ is a primitive spinor exception of $\gen(M)$. Since $r\not\equiv s\pmod2$ and $2\nmid \nu_3(abc)$, by Lemma \ref{Lemma quadratic field} we have $\Q(\sqrt{-tdM})=\Q(\sqrt{-6})$.

For each $p\ne2,3$, with the essentially same method as in the proof of Theorem \ref{Thm. A1} we see that if {\rm (ii)} holds, then we have
\begin{equation}\label{Eq. Spinor Mp in Thm.A2}
\theta(O^+(M_p))\subseteq{\rm N}_{\Q_p(\sqrt{-6})/\Q_p}(\Q_p(\sqrt{-6})^{\times})=\theta^*(M_p,t).
\end{equation}
We turn to $M_3$. If {\rm (iii)} holds, then by \cite[Satz 13]{Kn56} we have
\begin{align*}
\theta(O^+(M_3))
&=\Q_3^{\times2}\cup3^iu_1u_2\Q_3^{\times2}\cup3^ju_1u_3\Q_3^{\times2}\cup3^{i+j}u_2u_3\Q_3^{\times2}\\
&\subseteq\{1,-3\}\Q_3^{\times2}={\rm N}_{\Q_3(\sqrt{-6})/\Q_3}(\Q_3(\sqrt{-6})^{\times}).
\end{align*}
As $\Q_3(\sqrt{-6})/\Q_3$ is ramified, by \cite[Theorem 1(b)]{EHH} we obtain
\begin{equation}\label{Eq. spinor M3 in Thm.A2}
\theta(O^+(M_3))\subseteq{\rm N}_{\Q_3(\sqrt{-6})/\Q_3}(\Q_3(\sqrt{-6})^{\times})=\theta^*(M_3,t).
\end{equation}
Now we consider $M_2$. By (\ref{Eq. M2}) we have
$$M_2^{1/\ve}\cong\langle1,2^{r+2}b(a+2^sc),2^{s+2}ac\ve(a+2^sc)\rangle.$$
Suppose first that {\rm ($\alpha$)} of {\rm (i)} holds. Then by the local square theorem we have
$$M_2^{1/\ve}\cong\langle1,2^{3}ab,2^{s+2}c\ve\rangle.$$
Let $U\cong\langle1,2^3ab\rangle$ and $W=2^3ab\langle1,2^{s-1}abc\ve\rangle$.
As in the proof of Theorem \ref{Thm. A}, we have
$$\theta(O^+(M_2))=\q(\mathcal{P}(U))\q(\mathcal{P}(W))\Q_2^{\times2}.$$
Also, one can verify that
$$\q(\mathcal{P}(U))\Q_2^{\times2}=\theta(O^+(U))=\{\gamma\in\Q_2^{\times}: (\gamma,-2ab)_2=1\},$$
and that $\q(\mathcal{P}(W))\Q_2^{\times2}$ is equal to
$$2ab\cdot\theta(O^+(W))=
2ab\cdot\begin{cases}\{\gamma\in\Q_2^{\times}: (\gamma,-2abc\ve)_2=1\}&\mbox{if}\ s=4,\\\Q_2^{\times2}\cup 2abc\ve\Q_2^{\times2}&\mbox{if}\ s\ge6.\end{cases}$$
Clearly $$\theta(O^+(M_2))=\q(\mathcal{P}(U))\q(\mathcal{P}(W))\Q_2^{\times2}\subseteq\{\pm1,\pm6\}\Q_2^{\times2}$$
if {\rm ($\alpha$)} of {\rm (i)} holds. Assume now that {\rm ($\beta$)} of {\rm (i)} holds. Then
$$M_2^{1/\ve}\cong\langle1,2^{r+2}ab,2^{s+2}c\ve\rangle.$$
Letting $U\cong\langle1,2^{r+2}ab\rangle$, $W\cong2^{r+2}ab\langle1,2^{s-r}abc\ve\rangle$, one can easily verify that
$$\theta(O^+(M_2))=\q(\mathcal{P}(U))\q(\mathcal{P}(W))\Q_2^{\times2}.$$
Also, we have
$$\q(\mathcal{P}(U))\Q_2^{\times2}=\theta(O^+(U))=\Q_2^{\times2}\cup 2^rab\Q_2^{\times2},$$
and $\q(\mathcal{P}(W))\Q_2^{\times2}$ is equal to
$$2^rab\cdot\theta(O^+(W))=
2^rab\cdot\begin{cases}\{\gamma\in\Q_2^{\times}:(\gamma,-2abc\ve)_2=1\}&\mbox{if}\ s-r=1,3,\\\Q_2^{\times2}\cup 2abc\ve\Q_2^{\times2}&\mbox{if}\ s-r\ge5.\end{cases}$$
Clearly $$\theta(O^+(M_2))=\q(\mathcal{P}(U))\q(\mathcal{P}(W))\Q_2^{\times2}\subseteq\{\pm1,\pm6\}\Q_2^{\times2}$$
if {\rm ($\beta$)} of {\rm (i)} holds. In view of the above, by \cite[Theorem 2(c)]{EHH} we have
\begin{equation}\label{Eq. spinor norm of M2 in Thm. A2}
\theta(O^+(M_2))\subseteq{\rm N}_{\Q_2(\sqrt{-6})/\Q_2}(\Q_2(\sqrt{-6})^{\times})=\theta^*(M_2,t).
\end{equation}
In view of (\ref{Eq. Spinor Mp in Thm.A2})--(\ref{Eq. spinor norm of M2 in Thm. A2}), we obtain that $t=\mathfrak{sf}(abc)$ is a primitive spinor exception of $\gen(M)$ by (\ref{Eq. A}). Moreover, since the equation
$$24\mathcal{F}(x,y,z)+a+2^rb+2^sc=\mathfrak{sf}(abc)$$
has no integral solutions, we have $t\not\in\q(M)$. Hence by the discussion below the proof of Lemma \ref{Lemma local obstruction}, we see that $\mathcal{F}$ is not almost universal.

Now we turn to the ``only if " part. Given any $p\ne2,3$ with $p\mid\mathfrak{sf}(abc)$, as in the proof of Theorem \ref{Thm. A1}, $\theta(O^+(M_p))\subseteq{\rm N}_{\Q_p(\sqrt{-6})/\Q_p}(\Q_p(\sqrt{-6})^{\times})=\theta^*(M_p,t)$ implies that {\rm (ii)} holds.
Moreover, as $\Q_3(\sqrt{-6})/\Q_3$ is ramified, by \cite[Theorem 1(b)]{EHH} there exist $u_1,u_2,u_3\in\Z_3^{\times}$ and $0<i<j$ such that
$M_3\cong\langle u_1,3^iu_2,3^ju_3\rangle.$ By \cite[Satz 13]{Kn56} we have
\begin{align*}
\theta(O^+(M_3))
=\Q_3^{\times2}\cup3^iu_1u_2\Q_3^{\times2}\cup3^ju_1u_3\Q_3^{\times2}\cup3^{i+j}u_2u_3\Q_3^{\times2}.
\end{align*}
By this $\theta(O^+(M_3))\subseteq\{1,-3\}\Q_3^{\times}$ implies that {\rm (iii)} holds.

Now we consider $M_2$. Note that ${\rm N}_{\Q_2(\sqrt{-6})/\Q_2}(\Q_2(\sqrt{-6})^{\times})=\{\pm1,\pm6\}\Q_2^{\times2}$.
By (\ref{Eq. M2}) we have
$$M_2^{1/\ve}\cong\langle1,2^{r+2}b(a+2^sc),2^{s+2}ac\ve(a+2^sc)\rangle.$$
Let $$U\cong\langle1,2^{r+2}b(a+2^sc)\rangle,\ \text{and}\  W\cong2^{r+2}b(a+2^sc)\langle1,2^{s-r}abc\ve\rangle.$$
Suppose first $r>2$. Then by \cite[Theorem 2.7]{EH} we have
$\theta(O^+(M_2))=\q(\mathcal{P}(U))\q(\mathcal{P}(W))\Q_2^{\times2}\subseteq\{\pm1,\pm6\}\Q_2^{\times2}$. As in the `` if " part, we have $$\q(\mathcal{P}(U))\Q_2^{\times2}=\theta(O^+(U))=\Q_2^{\times2}\cup 2^rab\Q_2^{\times2},$$
and $\q(\mathcal{P}(W))\Q_2^{\times2}$ is equal to
$$2^rab\cdot\theta(O^+(W))=
2^rab\cdot\begin{cases}\{\gamma\in\Q_2^{\times}:(\gamma,-2abc\ve)_2=1\}&\mbox{if}\ s-r=1,3,\\\Q_2^{\times2}\cup 2abc\ve\Q_2^{\times2}&\mbox{if}\ s-r\ge5.\end{cases}$$
This gives $2^rab\in\{\pm1,\pm6\}\Q_2^{\times2}$. Hence $ab\equiv\pm((-1)^r+2)\pmod8$. By this $\q(\mathcal{P}(U))\q(\mathcal{P}(W))\Q_2^{\times2}\subseteq\{\pm1,\pm6\}\Q_2^{\times2}$ implies that
$abc\ve\equiv3\pmod8$ if $s-r=1,3$, and that $abc\ve\equiv\pm3\pmod8$ if $s-r\ge5$. This exactly shows that {\rm $(\beta)$} of {\rm (i)} holds.

Suppose now $r=2$. If $s=3$ or $5$, then by \cite[1.1]{EH} we have $\theta(O^+(M_2))=\Q_2^{\times}\not\subseteq\{\pm1,\pm6\}\Q_2^{\times2}$. Hence $s\ge7$ is odd. By \cite[Theorem 2.7]{EH} we also have
$\theta(O^+(M_2))=\q(\mathcal{P}(U))\q(\mathcal{P}(W))\Q_2^{\times2}\subseteq\{\pm1,\pm6\}\Q_2^{\times2}$. Note that
$$\q(\mathcal{P}(U))\Q_2^{\times2}=\theta(O^+(U))=\Q_2^{\times2}\cup ab\Q_2^{\times2}\cup 5ab\Q_2^{\times2}\cup 5\Q_2^{\times2},$$
and that $\q(\mathcal{P}(W))\Q_2^{\times2}$ is equal to
$$2^4ab\cdot\theta(O^+(W))=
ab\Q_2^{\times2}\cup 2c\ve\Q_2^{\times2}.$$
This gives $5\in\{\pm1,\pm6\}\Q_2^{\times2}$, which is a contradiction. Hence $r\ne2$.

Now we consider the case $r=1$. We have
$$M_2^{1/\ve}\cong\langle1,2^3b(a+2^sc),2^{s+2}ac\ve(a+2^sc)\rangle.$$
If $s=2$, then by \cite[1.1]{EH} we have $\theta(O^+(M_2))=\Q_2^{\times}\not\subseteq\{\pm1,\pm6\}\Q_2^{\times2}$. Hence $s\ge4$ is even. Similar to the above, we have
$$\q(\mathcal{P}(U))=\theta(O^+(U))=\{\gamma\in\Q_2^{\times}: (\gamma,-2ab)_2\}$$
and
$\q(\mathcal{P}(W))\Q_2^{\times2}$ is equal to
$$2^3ab\cdot\theta(O^+(W))=
2^3ab\cdot\begin{cases}\{\gamma\in\Q_2^{\times}:(\gamma,-2abc\ve)_2=1\}&\mbox{if}\ s=4,\\\Q_2^{\times2}\cup 2abc\ve\Q_2^{\times2}&\mbox{if}\ s\ge6.\end{cases}$$
This gives $2ab\in\{\pm1,\pm6\}\Q_2^{\times2}$ and hence $ab\equiv\pm3\pmod8$. If $ab\equiv-3\pmod8$, then $3\in\q(\mathcal{P}(U))$ and hence $-2\in\{\pm1,\pm6\}\Q_2^{\times2}$. This is a contradiction. Hence $ab\equiv3\pmod8$. From this $\q(\mathcal{P}(U))\q(\mathcal{P}(W))\Q_2^{\times2}\subseteq\{\pm1,\pm6\}\Q_2^{\times2}$ implies that
$abc\ve\equiv3\pmod8$ if $s=4$, and that $abc\ve\equiv\pm3\pmod8$ if $s\ge6$. This precisely shows that {\rm $(\alpha)$} of {\rm (i)} holds. Finally, with the same reason as in the proof of Theorem \ref{Thm. A}, the condition {\rm(iv)} holds.

This completes the proof. \qed

{\bf Proof of Theorem \ref{Thm. B}.} We first prove the `` if " part. As in the proof of Theorem \ref{Thm. A}, we shall show that $t=\mathfrak{sf}(abc)$ is a primitive spinor exception of $\gen(M)$. Since $r=s$ and $2\nmid \nu_3(abc)$, by Lemma \ref{Lemma quadratic field} we have $\Q(\sqrt{-tdM})=\Q(\sqrt{-3})$.

We first focus on $M_2$. By (\ref{Eq. M2}) we have
$$M_2^{1/\ve}\cong\langle1\rangle\perp2^{r+2}b(a+2^rc)\langle1,abc\ve\rangle,$$
where $\ve=a+2^rb+2^sc$. As $r>0$ is even and $bc\equiv3\pmod4$, we see that the lattice $2^{r+2}b(a+2^sc)\langle1,abc\ve\rangle$ has even order.
Since $\Q_2(\sqrt{-3})/\Q_2$ is unramified, by \cite[Theorem 2(a)]{EHH} we have
$$\theta(O^+(M_2))\subseteq{\rm N}_{\Q_2(\sqrt{-3})/\Q_2}(\Q_2(\sqrt{-3})^{\times})=\theta^*(M_2,t)$$

We now consider the lattices $M_p$ with $p\ne2,3$. With the essentially same method in the proof of Theorem \ref{Thm. A}. We have
$$\theta(O^+(M_p))\subseteq{\rm N}_{\Q_p(\sqrt{-3})/\Q_p}(\Q_p(\sqrt{-3})^{\times})=\theta^*(M_p,t)$$
for all $p\nmid\mathfrak{sf}(abc)$. And for all primes $p\mid\mathfrak{sf}(abc)$
$$\theta(O^+(M_p))\subseteq{\rm N}_{\Q_p(\sqrt{-3})/\Q_p}(\Q_p(\sqrt{-3})^{\times})=\theta^*(M_p,t)$$
if and only if $-3\in\Q_p^{\times2}$, i.e., $p\equiv1\pmod3$.

We now turn to $M_3$. As $M_3\cong\langle u_1,3^iu_2,3^ju_3\rangle$ satisfies that $u_1,u_2,u_3\in\Z_3^{\times}$, $u_1\equiv u_2\equiv u_3\pmod{3\Z_3}$ and $0<i<j$,
by \cite[Satz 3]{Kn56} we have
$$\theta(O^+(M_3))=
\Q_3^{\times2}\cup 3^iu_1u_2\Q_3^{\times2}\cup3^ju_1u_3\Q_3^{\times2}\cup3^{i+j}u_2u_3\Q_3^{\times2}\subseteq\{1,3\}\Q_3^{\times2}.$$
According to \cite[Theorem 1(b)]{EHH}, we obtain
$$\theta(O^+(M_3))\subseteq{\rm N}_{\Q_3(\sqrt{-3})/\Q_3}=\theta^*(M_3,t).$$
In view of the above, we see that $t=\mathfrak{sf}(abc)$ is a primitive spinor exception of $\gen(M)$. Now with the same method in the proof of Theorem \ref{Thm. A}, we obtain that $\mathcal{F}$ is not almost universal.

We now consider the `` only if " part. Suppose that $\mathcal{F}$ is not almost universal. Clearly there is at least one primitive spinor exception in $\mathcal{A}$. Let $t'\in\mathcal{A}$ be any primitive spinor exception of $\gen(M)$. Similar to the proof of Theorem \ref{Thm. A}, it is easy to see that $\Q(\sqrt{-t'dM})=\Q(\sqrt{-3})$ and $t'\in\mathfrak{sf}(abc)\Z^2$. Similar to the proof of the `` if " part, we obtain that
$$\theta(O^+(M_2))\subseteq{\rm N}_{\Q_2(\sqrt{-3})/\Q_2}(\Q_2(\sqrt{-3})^{\times})=\theta^*(M_2,t')$$
if and only if the lattice $2^{r+2}b(a+2^rc)\langle1,abc\ve\rangle$ is even. This implies that $r$ is even and $b\not\equiv c\pmod4$, Also, for all primes $p\ne3$ with $p\mid\mathfrak{sf}(abc)$
$$\theta(O^+(M_p))\subseteq{\rm N}_{\Q_p(\sqrt{-3})/\Q_p}(\Q_p(\sqrt{-3})^{\times})=\theta^*(M_p,t')$$
if and only if $-3\in\Q_p^{\times2}$. Hence {\rm (i)} and {\rm (ii)} hold. For the lattice $M_3$,
by \cite[Theorem 1(b)]{EHH} we must have
$$M_3\cong\langle u_1,3^iu_2,3^ju_3\rangle,$$
for some $u_1,u_2,u_3\in\Z_3^{\times}$ and some $0<i<j$. Hence by \cite[Satz 3]{Kn56} we have
$$\theta(O^+(M_3))=
\Q_3^{\times2}\cup 3^iu_1u_2\Q_3^{\times2}\cup3^ju_1u_3\Q_3^{\times2}\cup3^{i+j}u_2u_3\Q_3^{\times2}.$$
As ${\rm N}_{\Q_3(\sqrt{-3})/\Q_3}(\Q_3(\sqrt{-3})^{\times})=\{1,3\}\Q_3^{\times2}$,
one can verify that $\theta(O^+(M_3))\subseteq{\rm N}_{\Q_3(\sqrt{-3})/\Q_3}(\Q_3(\sqrt{-3})^{\times})$ if and only if {\rm(iii)} holds. Finally, similar to the proof of Theorem \ref{Thm. A}, the condition {\rm(iv)} holds.

This completes the proof.\qed

{\bf Proof of Theorem \ref{Thm. C}.} We first prove the `` if " part. As in the proof of Theorem \ref{Thm. A}, we shall show that $t=\mathfrak{sf}(abc)$ is a primitive spinor exception of $\gen(M)$. Since $r=s$ and $2\mid \nu_3(abc)$, by Lemma \ref{Lemma quadratic field} we have $\Q(\sqrt{-tdM})=\Q(\sqrt{-1})$.

We first focus on $M_2$. By (\ref{Eq. M2}) we have
$$M_2^{1/\ve}\cong\langle1\rangle\perp2^{r+2}b(a+2^rc)\langle1,abc\ve\rangle.$$
As {\rm(i)} holds, we obtain that $b(a+2^rc)\langle1,abc\ve\rangle\cong\begin{pmatrix} \alpha & 1 \\1 & 2\beta\end{pmatrix}$ for some $\alpha,\beta\in\Z_2^{\times}$ and
$(2^{r+2}b(a+2^rc),-abc\ve)_2=1$.
Hence $M_2$ satisfies \cite[1.2(3)]{EH}. By \cite[1.2(3)]{EH} we obtain that
\begin{align*}
\theta(O^+(M_2))&=\{x\in\Q_2^{\times}: (x,-abc\ve)_2=1\}=\{x\in\Q_2^{\times}: (x,-1)_2=1\}\\
&={\rm N}_{\Q_2(\sqrt{-1})/\Q_2}(\Q_2(\sqrt{-1})^{\times}).
\end{align*}

We now consider the lattices $M_p$ with $p\ne2,3$. With the essentially same method in the proof of Theorem \ref{Thm. A}. We have
$$\theta(O^+(M_p))\subseteq{\rm N}_{\Q_p(\sqrt{-1})/\Q_p}(\Q_p(\sqrt{-1})^{\times})=\theta^*(M_p,t)$$
for all $p\nmid\mathfrak{sf}(abc)$. And for all primes $p\mid\mathfrak{sf}(abc)$
$$\theta(O^+(M_p))\subseteq{\rm N}_{\Q_p(\sqrt{-1})/\Q_p}(\Q_p(\sqrt{-1})^{\times})=\theta^*(M_p,t)$$
if and only if $-1\in\Q_p^{\times2}$, i.e., $p\equiv1\pmod4$. Moreover, As $\Q_3(\sqrt{-1})/\Q_3$ is unramified, by \cite[Theorem 1(a)]{EHH} we see that
$$\theta(O^+(M_3))\subseteq{\rm N}_{\Q_3(\sqrt{-1})/\Q_3}(\Q_3(\sqrt{-1})^{\times})=\theta^*(M_3,t)$$
if and only if {\rm (iii)} holds. With the same method in the proof of Theorem \ref{Thm. A}, we see that $\mathcal{F}$ is not almost universal.

We now consider the `` only if " part. Suppose that $\mathcal{F}$ is not almost universal. Then there is at least one primitive spinor exception in $\mathcal{A}$. Let $t'\in\mathcal{A}$ be any primitive spinor exception. In view of the above, it is easy to see that $\Q(\sqrt{-t'dM})=\Q(\sqrt{-1})$ and $t'\in\mathfrak{sf}(abc)\Z^2$. Similar to the proof of the `` if " part, we obtain that
for all primes $p\mid\mathfrak{sf}(abc)$
$$\theta(O^+(M_p))\subseteq{\rm N}_{\Q_p(\sqrt{-1})/\Q_p}(\Q_p(\sqrt{-1})^{\times})=\theta^*(M_p,t')$$
if and only if $-1\in\Q_p^{\times2}$, i.e., ${\rm(ii)}$ holds,
and that
$$\theta(O^+(M_3))\subseteq{\rm N}_{\Q_3(\sqrt{-1})/\Q_3}(\Q_3(\sqrt{-1})^{\times})=\theta^*(M_3,t')$$
if and only if {\rm (iii)} holds.

We now turn to
$$M_2^{1/\ve}\cong\langle1\rangle\perp2^{r+2}b(a+2^rc)\langle1,abc\ve\rangle.$$
The spinor norms of lattice of this type can be obtained by \cite[1.2]{EH}. According to \cite[1.2]{EH} it is easy to see that if $M_2$ does not satisfy \cite[1.2(3)]{EH}, then
$\theta(O^+(M_2))=\Q_2^{\times}$ or $\Z_2^{\times}\Q_2^{\times2}$, which is not contained in the norm group ${\rm N}_{\Q_2(\sqrt{-1})/\Q_2}(\Q_2(\sqrt{-1})^{\times})$.
Hence $M_2$ must satisfy \cite[1.2(3)]{EH}. This first gives $r\ge2$. Also, when this occurs we must have
$$\theta(O^+(M_2))=\{x\in\Q_2^{\times}: (x,-bc)_2=1\}={\rm N}_{\Q_2(\sqrt{-1})/\Q_2}(\Q_2(\sqrt{-1})^{\times}).$$
This shows that $-bc\in-\Q_2^{\times2}$, i.e., $bc\equiv 1\pmod 8$. Moreover, as $M_2$ satisfies \cite[1.2(3)]{EH} we further have $(2^rb(a+2^rc),-bc)_2=1$. This gives $a\equiv b\pmod4$. Hence {\rm (i)} holds. With the same method in the proof of Theorem \ref{Thm. A}, the condition {\rm(iv)} holds.

This completes the proof.\qed

Before the proof of Theorem \ref{Thm. D}. We briefly discuss the Jordan decomposition of $M_2$ in the case $r=s=0$. By the symmetry of $a,b,c$ in this case we may assume $b\equiv c\pmod4$. By computation one can verify that
$$M_2\cong\langle\ve\rangle\perp4\ve\begin{pmatrix} a(b+c) & -ab \\-ab & b(a+c)\end{pmatrix}.$$
Let $K$ be the lattice $\begin{pmatrix} a(b+c) & -ab \\-ab & b(a+c)\end{pmatrix}$. Then $K$ is unimodular and ${\bf n}(K)=2\Z_2$ (where ${\bf n}(K)$ denotes the norm of the lattice $K$).
By \cite[Lemma 5.2.6]{Ki} we have
\begin{equation}\label{Eq. M2 in the case r=s=0}
M_2\cong\begin{cases}\langle\ve\rangle\perp4\ve\begin{pmatrix} 2 & 1 \\1 & 2\end{pmatrix}&\mbox{if}\ a\equiv b\equiv c\pmod4,\\\\ \langle\ve\rangle\perp4\ve\begin{pmatrix} 0 & 1 \\1 & 0\end{pmatrix}&\mbox{if}\ a\not\equiv  b\equiv c\pmod4.\end{cases}
\end{equation}
Now we prove our last theorem.

{\bf Proof of Theorem \ref{Thm. D}}. We shall show that there are no primitive spinor exceptions of $\gen(M)$ in $\mathcal{A}$, which implies that $\mathcal{F}$ is almost universal by the discussion below the proof of Lemma \ref{Lemma local obstruction}. Suppose that there is a primitive spinor exception $t\in\mathcal{A}$. We divide the remaining part of proof into the following cases.

{\bf Case I.} $\nu_3(abc)$ is odd.

By Lemma \ref{Lemma quadratic field} we have $\Q(\sqrt{-tdM})=\Q(\sqrt{-3})$. By (\ref{Eq. M2 in the case r=s=0}) we have
$$M_2\cong\begin{cases}\langle\ve\rangle\perp4\ve\begin{pmatrix} 2 & 1 \\1 & 2\end{pmatrix}&\mbox{if}\ a\equiv b\equiv c\pmod4,\\\\ \langle\ve\rangle\perp4\ve\begin{pmatrix} 0 & 1 \\1 & 0\end{pmatrix}&\mbox{if}\ a\not\equiv  b\equiv c\pmod4.\end{cases}$$
By \cite[Lemma 5.2.6]{Ki} we have
\begin{equation}\label{Eq. values of two unimodular lattice with norm (2)}
\q\(\begin{pmatrix} \rho & 1 \\1 & \rho\end{pmatrix}\)=\begin{cases}2\Z_2&\mbox{if}\ \rho=0,\\\{0\}\cup\{x\in\Z_2: \nu_2(x)\equiv1\pmod2\}&\mbox{if}\ \rho=2.\end{cases}
\end{equation}
Noting that $\Q_2(\sqrt{-3})/\Q_2$ is unramified, by \cite[Theorem 2(a)]{EHH} and (\ref{Eq. values of two unimodular lattice with norm (2)})
we have
$$\theta(O^+(M_2))\nsubseteq{\rm N}_{\Q_2(\sqrt{-3})/\Q_2}(\Q_2(\sqrt{-3})^{\times}).$$
This contradicts (\ref{Eq. A}).

{\bf Case II.} $\nu_3(abc)$ is even.

By Lemma \ref{Lemma quadratic field} we have $\Q(\sqrt{-tdM})=\Q(\sqrt{-1})$. Now we consider $M_2$.
By (\ref{Eq. values of two unimodular lattice with norm (2)}) and \cite[1.2]{EH}, we have $\theta(O^+(M_2))=\Q_2^{\times}$ or $\Z_2^{\times}\Q_2^{\times2}$,
which is not contained in ${\rm N}_{\Q_2(\sqrt{-1})/\Q_2}(\Q_2(\sqrt{-1})^{\times})$. This contradicts (\ref{Eq. A}).

In view of the above cases, we see that there are no primitive spinor exceptions in $\gen(M)$. Hence
$\mathcal{F}$ is almost universal.

This completes the proof.\qed

\Ack\ This research was supported by the National Natural Science Foundation of
China (Grant No. 11971222).

\end{document}